\newtheorem{thm}{Theorem}[section]
\newtheorem{lem}[thm]{Lemma}
\newtheorem{prop}[thm]{Proposition}
\newtheorem{defi}[thm]{Definition}
\newtheorem{rmk}[thm]{Remark}
\newtheorem{exmp}[thm]{Example}
\renewcommand{\theequation}{\arabic{section}.\arabic{equation}}
 \numberwithin{equation}{section}
\newcommand{\E}{\mathbb{E}}
\newcommand{\N}{\mathbb{N}}
\newcommand{\D}{\mathcal{D}}
\newcommand{\R}{\mathbb{R}}
\newcommand{\B}{\mathcal{B}}
\newcommand{\T}{\mathcal{T}}
\newcommand{\F}{\mathcal{F}}
\begin{document}

\title
{Ergodicity of Sublinear Markovian Semigroups
}

\author[1]{
Chunrong Feng}
\author[1,2]{Huaizhong Zhao}
\affil[1]{Department of Mathematical Sciences, Durham
University, Durham DH1 3LE, UK}
\affil[2]{Research Centre for Mathematics and Interdisciplinary Sciences, Shandong University, Qingdao, 266237, China}
\affil[ ]{chunrong.feng@durham.ac.uk, huaizhong.zhao@durham.ac.uk}
\date{}

\maketitle
\newcounter{bean}
\vskip-10pt

\begin{abstract}
In this paper, we study the ergodicity of invariant sublinear expectation of sublinear Markovian semigroup. For this, we first develop an ergodic theory of an expectation-preserving map on a sublinear expectation space. Ergodicity is defined as any invariant set either has $0$ capacity itself or its complement has $0$ capacity. We prove, under a general sublinear expectation space setting, the equivalent relation between ergodicity and the corresponding transformation operator having simple eigenvalue $1$, and also with Birkhoff type strong law of large numbers if the sublinear expectation is regular. For sublinear Markov process, we prove that its ergodicity is equivalent to the Markovian semigroup having eigenvalue $1$ and it is simple in the space of bounded measurable functions. As an example we show that $G$-Brownian motion $\{B_t\}_{t\geq 0}$ on the unit circle has an invariant expectation and is ergodic if and only if ${\mathbb E}(-(B_1)^2)<0$. Moreover, it is also proved in this case that  the invariant expectation is regular and the canonical stationary process has no mean-uncertainty under the invariant expectation. 
\medskip

\noindent
{\bf Keywords:} Invariant sublinear expectation; spectrum; sublinear Markovian semigroup; $G$-Brownian motion; no mean-uncertainty; fully nonlinear PDEs.
\medskip

\noindent 
{\bf Mathematics Subject Classifications (2000): 60H10, 60J65, 37H05, 37A30.}
\end{abstract}

\pagestyle{fancy}
\fancyhf{}
\fancyhead[LE,RO]{\thepage}
\fancyhead[LO]{\small{Ergodicity of Sublinear Markov Processes}}
\fancyhead[RE]{\small{C. R. Feng and  H. Z. Zhao}}
\renewcommand{\headrulewidth}{0.1pt}

 \renewcommand{\theequation}{\arabic{section}.\arabic{equation}}

\section{Introduction}

 Let $(  \Omega,   {\cal F},  P)$ be a probability space. The measure theoretical ergodic theory deals with a
 measure preserving map $  \theta: (  \Omega,   {\cal F})\to (  \Omega,   {\cal F})$ such that
 $  \theta  P=  P$.
Recall that the measurable dynamical system $\{{  \theta}^n\}_{n\in {\mathbb N}}$
on $(  \Omega,   {\cal F},  P)$ is called ergodic if any invariant set $A\in   {\cal F}$, i.e.
${  \theta}^{-1}A=A$, has either full measure or zero measure.
Ergodicity describes the indecomposable property of the system (c.f. Walters \cite{walters}). 
The well known result of Birkhoff's theorem says that 
a dynamical system is ergodic if and only if
in the long run, the time average of a function along its trajectory is the same as the spatial average on the entire space
with respect to the stationary measure (Birkhoff \cite{birkhoff}, von Neumann \cite{von neumann1, von neumann2}).

Due to the spreading nature of random forcing, ergodicity is an important common feature of stochastic systems.
It has aroused enormous interests of mathematicians
(c.f. Da Prato-Zabczyk \cite{da-prato}, Durrett \cite{durrett}, Feng-Zhao \cite{feng}).
For a Markovian random dynamical system,
it is well known that $1$, is a simple 
eigenvalue of the Markovian semigroup iff the stochastic system is ergodic, and is a unique eigenvalue on the unit circle and is simple iff
the stochastic system is weakly mixing. The latter statement is equivalent to the Koopman-von Neumann theorem.  
Recently, the ergodic theory
for periodic measures was obtained, in which it was proved that the Markovian semigroup has eigenvalues, $\{{\rm e}^{i{2m\pi\over \tau} t}\}_{m\in {\mathbb Z}}$, for a $\tau>0$, on the unit circle apart from the eigenvalue $1$ (Feng-Zhao \cite{feng}). Moreover, invariant measures of quasi-periodic 
stochastic systems were observed in Feng-Qu-Zhao \cite{fqz2}.

On a completely different topic, the concept of sublinear expectation is central in probability and statistics under uncertainty,
measures of risk and superhedging in finance
(Artzner-Delbaen-Eber-Heath \cite{delbaen1}, Chen-Epstein \cite{chen1}, El Karoui-Peng-Quenez \cite{peng1}, Follmer-Shied \cite{follmer1}). 
A coherent risk measure, that is defined as a real valued (monetary value) functional with properties
of constant preserving (cash invariance), monotonicity, convexity and positive homogeneity, is equivalent to a sublinear expectation. A systematic stochastic analysis of nonlinear/sublinear expectation and G-Brownian motion has been given in the substantial work Peng \cite{peng2005, peng2006, peng2010}.  

It is clear now that the corresponding partial differential equations of G-diffusions are fully nonlinear parabolic partial differential equations. 
They give the Markovian semigroup of $G$-diffusion processes (Peng \cite{peng2005, peng2006, peng2010}). It is noted that fully nonlinear PDEs have been intensively studied in literature e.g.  in Cafarelli-Cabre \cite{cafarelli}, Krylov \cite{krylov1,krylov}, Lions \cite{lions}. More recently,  the viscosity solution of path dependent fully nonlinear PDEs has been of great interests (Ekren-Touzi-Zhang \cite{zhangj1, zhangj2}, Peng \cite{peng2011}). However, study of the dynamical properties of long time behaviour of G-diffusion processes is still missing.
In this context, an ergodic theory under the sublinear expectation setting will be key to this study.
Our results will give the invariant properties, equilibrium and the statistical property of the stochastic systems under uncertainty. 

It is worth noting that economists already observed ``nonlinearities" in the behaviour of real world trading in financial market due 
to heterogeneity of expectation-formation processes (Culter-Poterba-Summers \cite{cutler}, De Long-Shleifer-Summers-Waldmann \cite{delong}, Frankel-Froot \cite{frankel}, Greenwood-Shleifer \cite{greenwood}, Williams \cite{williams}). Potentially biased beliefs of future price movements drive the decision of stock-market participants and create ambiguous volatility. 
To use sublinear expectations and G-Brownian motions to model ambiguity has been attempted in mathematical finance
literature e.g. Chen-Epstein \cite{chen1}, Epstein-Ji \cite{epstein}.


In this paper, we will go beyond the measure space
framework  to study an ergodic theory in a nonlinear
functional setting.
The lack of the dominated 
convergence and the Riesz representation creates a lot of difficulty to the analysis.
But the topology of 
a sublinear expectation space is still rich enough for us to define the ergodicity.
We will establish its equivalence
with the indecomposable property and characterisation in terms of spectrum of transformation operators.
We will prove the law of large numbers (LLN) also implies ergodicity, but the converse holds under a 
regularity assumption. 
This set-up is a natural framework for the ergodicity of invariant expectation of continuous 
time sublinear Markovian semigroup
such as that of G-diffusions.

It is noted here that the convergence of the LLN we study in this paper is in the pathwise sense quasi-surely. Convergence of the LLN in the sense of distribution was obtained by Peng \cite{peng2010} for independently identically distributed (i.i.d.) random variable sequences. 
But an i.i.d. random variable sequence may not be stationary in non-additive probability setting (Feng-Wu-Zhao \cite{feng-wu-zhao}). On the other hand, in our theory the independence assumption is not needed. Thus the LLN in the ergodic sense we study here and Peng's LLN for i.i.d. random variable sequences have different conclusions under different assumptions. 

We study Markovian stochastic dynamical systems with noise over a sublinear expectation
space.
A canonical sublinear expectation space with an expectation preserving map is constructed
 from an invariant expectation by the nonlinear Kolmogorov extension theorem onto the lifted path space. There is a natural expectation preserving dynamical system on the canonical sublinear expectation space.
The ergodicity of the stochastic system is then 
given by that of the canonical system. Its equivalence with a spectral property 
of the Markovian semigroup is also established.


As an example we show that the $G$-Brownian motion $B_t=\sqrt {t} \xi$ on the unit circle, 
where $\xi$ has normal distribution $N(0, [\underline \sigma ^2, \bar\sigma ^2])$, has an ergodic invariant expectation if and only if $\underline \sigma ^2>0$. Moreover, the 
invariant expectation and its extension on the canonical path space are regular so a 
Birkhoff type law of large numbers holds.
It is also noted that the canonical stationary process, which is the process corresponding 
to the large time behaviour, has no mean-uncertainty under the invariant 
expectation. 

This paper is the first paper to study the ergodic theory on a sublinear expectation space. This study is very general to include both discrete time and continuous time cases. 
Extending ideas of this paper on
the discrete time case, 
ergodicity for capacity especially upper probability has been obtained in Feng-Wu-Zhao \cite{feng-wu-zhao}.  Inspired by this work, the ergodicity of upper expectations generated from 
periodic measures has also been obtained (Feng-Qu-Zhao \cite{feng1}).


 \section{Dynamical systems on sublinear expectation spaces and ergodicity}\label{sec2}

  We first briefly recall the concept of sublinear expectation for convenience.
  Let $( \Omega,  {\cal F})$
  be a measurable space, $ \D$ be the linear space of all $ {\cal F}$-measurable real-valued
  functions. In particular, the indicator functions of any $ {\cal F}$-measurable sets which will be used in this paper are included in $\D$.

 \begin{defi} \label{zhao522}(c.f. Peng \cite{peng2010})
 A sublinear expectation $ \E$ is a functional $ \E:  \D\to \R$ satisfying\\
 (i) Monotonicity: $$ \E[X]\geq  \E[Y],\ {\rm if}\  X\geq Y.$$
 (ii) Constant preserving: $$ \E[c]=c,\ {\rm for}\ c\in \R.$$
 (iii) Sub-additivity: for each $X, Y\in  \D$,
 $$ \E[X+Y]\leq  \E[X]+ \E[Y].$$
 (iv) Positive homogeneity:
 $$ \E[\lambda X]=\lambda  \E[X],\ {\rm for}\ \lambda \geq 0.$$
 The triple $( \Omega,  \D, \E)$ is called a sublinear expectation space. 
 \end{defi}

The ergodicity concept in the sublinear situation is very subtle due to the absence of the linearity for functionals. 
The essence of the ergodicity is indecomposibility of dynamical systems. 
However, unlike in the classical ergodic theory, a set $A$ satisfying 
$ \E{\rm I}_{A}=1$ does not imply $ \E{\rm I}_{A^c}=0$ as the sublinear expectation $ \E$
only satisfies
\begin{eqnarray}
 \E{\rm I}_A+ \E{\rm I}_{A^c}\geq 1.
\end{eqnarray}
In fact it is quite possible that $ \E{\rm I}_{A}=1$ and $ \E{\rm I}_{A^c}=1$.  As a consequence, it is not viable to extend the classical definition of ergodicity which says that
any invariant set $A$ has either probability $0$ or $1$ to
$ \E I_A=0$ or $1$ in the sublinear case. 

The nonadditivity also creates a lot of technical difficulty to the analysis of its dynamics due to the missing of many important analysis tools such as the dominated 
convergence and the Riesz representation. 
But the topology of 
a sublinear expectation space is still rich enough for us to define the ergodicity in which the indecomposability is the most important property to survive. This is in line with the classical definition in measure theoretical ergodic theory. 
We observe that
three different forms of ergodicity in terms of invariant sets, spectrum of transformation operators and strong law of large numbers are still equivalent under the sublinear expectation setting with slightly stronger functionals satisfying the regularity given below. Without assuming conditions (iii) and (iv) in Definition \ref{zhao522}, it is still not clear how to define ergodicity in line with indecomposability. 

 The representation result (Artzner-Delbaen-Eber-Heath \cite{delbaen1}, Delbaen \cite{delbaen2}, Follmer-Schied \cite{follmer})
 says that there exists a family of linear expectations $\{E_{\theta}: \theta\in \Theta\}$
 defined on $ \D$ such that
 $$ \E[X]=\sup_{\theta\in \Theta}E_{\theta }[X].$$ 
 If it is assumed further that 
 \begin{eqnarray}\label{new1}
 \E[X_i]\to 0,
{\rm \ for\  each\ sequence\ of \ measurable\ functions}{\rm \ such\ that}\ 
 X_i(\omega)\downarrow 0 {\rm \ for\ each} \ \omega, 
  \end{eqnarray} 
 by Daniell-Stone theorem, the following {\it representation  as upper integrals} holds: there exists a family of $\sigma$-additive probability measures ${\cal P}$ 
 on $( \Omega,  {\cal F})$ such that
 \begin{eqnarray}\label{zhao521}
  \E[X]=\sup_{P\in {\cal P}}E_{P}[X].
 \end{eqnarray}
The representation as upper integrals is not essential to proceed our theory. We only need this in the proof of the LLN from the ergodicity.
We introduce the regularity of the following form:

\begin{defi}\label{zhao41}
 The functional $ \E[\cdot]$ is said to be regular if for any $A_n\in {\cal F}$, $A_n\downarrow \emptyset$, we have  $ \E[I_{A_n}]\downarrow 0$.
 \end{defi}


\begin{rmk}
(i). Definition \ref{zhao41} is equivalent to that if for any $A_n\in {\cal F}$, $A_n\downarrow A$ and $ \E I_A=0$ we have 
$ \E[I_{A_n}]\downarrow 0$. This can be see from
\begin{eqnarray*}
| \E[I_{A_n}]- \E[I_{A}]|\leq  \E[I_{A_n\setminus A}].
\end{eqnarray*}

(ii) A similar condition in Definition \ref{zhao41} in the capacity setting was also used in literature e.g. Cerreia-Vioglio-Maccheroni-Marinacci \cite{cerreia} where this was called continuous.

(iii) In Lemma  \ref {lem3.6} and Proposition \ref{prop3.26}, we will prove that the semigroup and the invariant expectation for $G$-Brownian motion on $S^1$ are regular.

(iv) The main results of this section are the relationships of ergodicity (E), the simpleness of eigenvalue 1 of the transformation operator $U_t$ on the bounded and measurable function space  (SE) and the law of large numbers (LLN). We prove (E) $\iff$ (SE) $\Leftarrow$ (LLN) without the regular condition.  It is needed only in the proof of (E) $\Rightarrow$ (LLN).
\end{rmk}

 Now we introduce a measurable transformation
$ \theta: \Omega\to \Omega$ that preserves the sublinear expectation $ \E$, i.e.
\begin{eqnarray}
 \theta \E= \E.
\end{eqnarray}
Here $ \theta \E$ is defined as
\begin{eqnarray*}
 \theta \E[X(\cdot)]= \E[X( \theta\cdot)] \ \ {\rm for\ any}  \  X\in  \D.
\end{eqnarray*}
Set the transformation operator $U_1:  \D\to  \D$ by
$$U_1\xi( \omega)=\xi( \theta \omega),\ \xi\in  {\cal D}.
$$
Then expectation  preserving of $ \theta$ is equivalent to
$$
 \E[U_1\xi]= \E[\xi], \  {\ \rm for \ any } \ \xi\in  \D.
$$

Define ${ \theta}^n= \theta\circ \theta\circ\dots\circ \theta$, $n\in {\mathbb N}$. Then $\{
{ \theta}^n\}_{n\in \mathbb N}$ forms a family of measurable transformations from $( \Omega,  {\cal F})$
to itself and satisfies expectation preserving property and the semigroup property:
\begin{eqnarray}
{ \theta}^{m+n}={ \theta}^{m}\circ { \theta}^{n},  \ {\rm for } \ n,m\in {\mathbb N}.
\end{eqnarray}
Thus $\{
{ \theta}^n\}_{n\in \mathbb N}$  is a dynamical system on $( \Omega,  {\cal D},  \E)$ and preserves
the sublinear expectation. In the following we will denote $ S=( \Omega,  {\cal D},  \E,
\{{ \theta}^n\}_{n\in \mathbb N})$ the dynamical system.

We call that a statement holds quasi-surely if it is true for all $ \omega \in  \Omega \setminus A$ for a set $A$ with $\E [I_A]=0$
and $v$-almost surely ($v-a.s.$) if it is true for all $\omega\in  \Omega \setminus A$
for a set $A$ with $\E [-I_A]=0$.

If a set $B\in  {\cal F}$ satisfies
\begin{eqnarray}
{ \theta}^{-1}B=B,
\end{eqnarray}
then we say the set $B$ is invariant with respect to the transformation
$ \theta$. First we prove the following result. 

\begin{thm}\label{hz5}
If $ \theta:  \Omega\to  \Omega$ is a measurable expectation
preserving transformation of the sublinear expectation space $( \Omega,  \D, \E)$, 
then the statements, 

\item{(i)} Any invariant measurable set $B\in  {\cal F}$ with respect to $ \theta$
satisfies either $ \E{\rm I}_B=0$ or $ \E{\rm I}_{B^c}=0$;

\item{(ii)} If $B\in  {\cal F}$ and $ \E{\rm I}_{{ \theta}^{-1}B\bigtriangleup  B}=0$, then either $ \E{\rm I}_B=0$ or $ \E{\rm I}_{B^c}=0$;

\item{(iii)} For every $A\in  {\cal F}$ with $ \E{\rm I}_A>0$, we have $ \E{\rm I}_{(\bigcup \limits_{n=1}^{\infty}{ \theta}^{-n}A)^c}=0$;

\item{(iv)} For every $A, B\in  {\cal F}$ with $ \E{\rm I}_A>0$ and $ \E{\rm I}_B>0$,
there exists $n\in {\mathbb N}^+$ such that $ \E{\rm I}_{({ \theta}^{-n}A\cap B)}>0$,

\noindent
have the following relations:  (i) and (ii) are
equivalent; (iii) implies (iv); (iv) implies (i). Moreover, if $ \E$ is regular, then (ii) implies (iii) and all the above four 
statements are equivalent.
\end{thm}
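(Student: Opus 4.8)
The plan is to prove the trivial implications directly, then the two substantive ``approximation by an invariant set'' implications (i)$\Rightarrow$(ii) and (ii)$\Rightarrow$(iii), and finally the chain (iii)$\Rightarrow$(iv)$\Rightarrow$(i), so that the cycle (i)$\Leftrightarrow$(ii)$\Rightarrow$(iii)$\Rightarrow$(iv)$\Rightarrow$(i) closes. Throughout I would exploit the identity $\hat\E{\rm I}_A=\sup_{P\in\mathcal P}P(A)$ coming from the representation \eqref{zhao521}: it yields, for free, countable subadditivity on null sets (if $\hat\E{\rm I}_{A_n}=0$ for all $n$, then $P(A_n)=0$ for every $P$, hence $P(\bigcup_nA_n)=0$ and $\hat\E{\rm I}_{\bigcup_nA_n}=0$), a fact I would use repeatedly in place of the missing dominated convergence. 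The two implications into (i) are immediate: if $\hat\theta^{-1}B=B$ then $\hat\theta^{-1}B\Delta B=\emptyset$, so (ii) contains (i) as a special case; and for (iv)$\Rightarrow$(i), if an invariant $B$ had both $\hat\E{\rm I}_B>0$ and $\hat\E{\rm I}_{B^c}>0$, applying (iv) to the pair $B^c,B$ would produce an $n$ with $\hat\E{\rm I}_{\hat\theta^{-n}B^c\cap B}>0$, which is impossible since $\hat\theta^{-n}B^c=B^c$ and $B^c\cap B=\emptyset$.

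For (i)$\Rightarrow$(ii) I would approximate a quasi-invariant $B$ (one with $\hat\E{\rm I}_{\hat\theta^{-1}B\Delta B}=0$) by the genuinely invariant set $\tilde B:=\limsup_n\hat\theta^{-n}B=\bigcap_N\bigcup_{n\ge N}\hat\theta^{-n}B$, which satisfies $\hat\theta^{-1}\tilde B=\tilde B$. The key estimate is $\hat\E{\rm I}_{B\Delta\hat\theta^{-n}B}=0$ for every $n$: this follows from the telescoping inclusion $B\Delta\hat\theta^{-n}B\subseteq\bigcup_{k=0}^{n-1}\hat\theta^{-k}(B\Delta\hat\theta^{-1}B)$, expectation preservation (each term has zero capacity), and finite subadditivity. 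Countable subadditivity on null sets then gives $\hat\E{\rm I}_{\bigcup_{n\ge1}(B\Delta\hat\theta^{-n}B)}=0$, and a short set-chase shows $B\Delta\tilde B\subseteq\bigcup_{n\ge1}(B\Delta\hat\theta^{-n}B)$, whence $\hat\E{\rm I}_{B\Delta\tilde B}=0$. Subadditivity upgrades this to $\hat\E{\rm I}_B=\hat\E{\rm I}_{\tilde B}$ and $\hat\E{\rm I}_{B^c}=\hat\E{\rm I}_{\tilde B^c}$, so the ergodic dichotomy for the invariant set $\tilde B$ transfers to $B$, giving (ii).

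The implication (iii)$\Rightarrow$(iv) is a contradiction argument. With $C:=\bigcup_{n\ge1}\hat\theta^{-n}A$, statement (iii) gives $\hat\E{\rm I}_{C^c}=0$; if $\hat\E{\rm I}_{\hat\theta^{-n}A\cap B}=0$ for all $n$, then countable subadditivity yields $\hat\E{\rm I}_{C\cap B}=0$, and subadditivity gives $\hat\E{\rm I}_B\le\hat\E{\rm I}_{C\cap B}+\hat\E{\rm I}_{B\cap C^c}=0$, contradicting $\hat\E{\rm I}_B>0$.

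The crux, and the only place where strong regularity (Definition \ref{zhao41}) is indispensable, is (ii)$\Rightarrow$(iii). Given $\hat\E{\rm I}_A>0$, set $E:=\bigcup_{n\ge0}\hat\theta^{-n}A$ and $F_N:=\bigcup_{n\ge N}\hat\theta^{-n}A=\hat\theta^{-N}E$, so that $\hat\E{\rm I}_{F_N}=\hat\E{\rm I}_E$ for all $N$ by expectation preservation, while $F_N\downarrow F_\infty:=\limsup_n\hat\theta^{-n}A$, an invariant set. Here the absence of continuity from above would block the argument, so I would invoke strong regularity through $F_N\setminus F_\infty\downarrow\emptyset$ to obtain $\hat\E{\rm I}_{F_N\setminus F_\infty}\downarrow0$; combined with $\hat\E{\rm I}_{F_N}\le\hat\E{\rm I}_{F_\infty}+\hat\E{\rm I}_{F_N\setminus F_\infty}$ this forces $\hat\E{\rm I}_{F_\infty}=\hat\E{\rm I}_E\ge\hat\E{\rm I}_A>0$. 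Applying (ii) to the invariant $F_\infty$ then rules out $\hat\E{\rm I}_{F_\infty}=0$, so $\hat\E{\rm I}_{F_\infty^c}=0$; since $F_\infty\subseteq C=\bigcup_{n\ge1}\hat\theta^{-n}A$, monotonicity gives $\hat\E{\rm I}_{C^c}\le\hat\E{\rm I}_{F_\infty^c}=0$, which is (iii). With (ii)$\Rightarrow$(iii) in place the cycle closes and all four statements coincide. I expect this continuity-from-above step to be the main obstacle, since every other step survives on subadditivity and the representation alone.
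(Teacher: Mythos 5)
Your proof is correct and follows essentially the same route as the paper: the same $\limsup$ invariant-set approximation with the telescoping estimate for (i)$\Rightarrow$(ii), the same contradiction arguments for (iii)$\Rightarrow$(iv) and (iv)$\Rightarrow$(i), and strong regularity invoked at exactly the same point (continuity from above onto the invariant $\limsup$ set) in (ii)$\Rightarrow$(iii). The only cosmetic difference is that where the paper appeals to the monotone (increasing) convergence theorem for sublinear expectations to conclude $\hat \E{\rm I}_{B\Delta B_\infty}=0$, you instead use countable subadditivity on null sets from the representation $\hat\E{\rm I}_A=\sup_{P\in{\cal P}}P(A)$ together with the direct inclusion $B\Delta \tilde B\subseteq\bigcup_{n\geq 1}\bigl(B\Delta \hat\theta^{-n}B\bigr)$ --- an equally valid and arguably cleaner step, since the paper itself already relies on countable subadditivity in the same passage.
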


In (ii) above, $\cdot\bigtriangleup\cdot $ means the symmetric difference. The proof of this theorem is postponed to the Appendix. The result is similar to that in the classical ergodic theory, 
but it needs to deal with the issues due to the lack of 
additivity of probability. It is crucial to establish relations of these four statements especially their equivalence when $ {\mathbb E}$ is regular under the sublinear expectation setting. We will see that the combination of sublinearity and Statement (i) enable us to establish the ergodic theory. 
  We now discuss Statement (i) more closely. 
Note if the set  $B$ is invariant, then it is easy to see that
${ \theta}^{-1}(B^c)=B^c$. Thus in the case that $0< \E{\rm I}_B\leq 1$ and
$0<  \E{\rm I}_{B^c}\leq 1$, we could study $ \theta$ by studying two simpler transformations $ \theta|_{B}$
and $ \theta|_{B^c}$ separately.  In contrary, if $ \E{\rm I}_B=0$ and
$ \E{\rm I}_{B^c}=1$, we only need to study  $ \theta|_{B^c}$. Similarly, if $ \E{\rm I}_B=1$ and
$ \E{\rm I}_{B^c}=0$, we only need to study  $ \theta|_{B}$.  In the latter two cases, the transformation is indecomposable.
However it is noted that $ \E{\rm I}_{B}=0$ implies  $ \E{\rm I}_{B^c}=1$
and $ \E{\rm I}_{B^c}=0$ implies  $ \E{\rm I}_{B}=1$.
With the above observations, we give the following definition.

\begin{defi}
Let $( \Omega,  {\cal D}, \E)$ be a sublinear expectation space. An expectation
preserving transformation $ \theta$ of $( \Omega, {\cal D},  \E)$ is called ergodic if any invariant measurable set $B\in  {\cal F}$
satisfies either $ \E{\rm I}_B=0$ or $ \E{\rm I}_{B^c}=0$.
\end{defi}
\begin{thm}\label{hz16}
If $( \Omega, \D,  \E)$ is a sublinear expectation space and the measurable map $ \theta:  \Omega\to  \Omega$ is expectation preserving, 
then the following statements are equivalent:

\item{(i)}. The map $ \theta$ is ergodic;

\item{(ii)}.
 Whenever $\xi:  \Omega\to {\mathbb R} \ (or \ \ {\mathbb C})$ is  measurable, bounded quasi-surely and
 $U_1\xi=\xi$, then $\xi$ is constant quasi-surely.

If $ \E$ is regular, then (i) and (ii) are equivalent to
 \item{(iii)}.
 Whenever $\xi:  \Omega\to {\mathbb R}\  (or\  {\mathbb C})$ is measurable and $U_1\xi=\xi$ quasi-surely, then $\xi$ is constant quasi-surely.

%
\end{thm}

\begin{proof} 
It is trivial to see that (iii)$\Rightarrow$(ii). 

(ii)$\Rightarrow$(i). 
Consider $A\in  {\cal F}$ as an invariant set. Note ${\rm I}_A$ is bounded measurable and
satisfies $U_1{\rm I}_A={\rm I}_A$ quasi-surely. Thus ${\rm I}_A$ is constant quasi-surely. So ${\rm I}_A=0$ or $1$.
If ${\rm I}_A=0$ quasi-surely, then $ \E{\rm I}_A=0$. If ${\rm I}_A=1$ quasi-surely, then ${\rm I}_{A^c}=1-{\rm I}_A=0$ quasi-surely, so $ \E{\rm I}_{A^c}=0.$
That is to say either $ \E{\rm I}_A=0$ or $ \E{\rm I}_{A^c}=0$. Thus $ \theta $ is ergodic.

(i)$\Rightarrow$(iii). Now we assume $ \E$ is regular.
Let $ \theta$ be ergodic, $\xi$ be measurable and $U_1\xi=\xi$ quasi-surely. We assume $\xi$ to be real-valued as if $\xi$ is 
complex-valued, we can consider the real and imaginary parts
separately.  we will prove $\xi$ is a constant.
For a number $\alpha\in \R$, define
$A_\alpha=\{ \omega: \xi( \omega)>\alpha\}$ and $A_\alpha^c=\{ \omega: \xi( \omega)\leq\alpha\}$.
Note that $ \xi( \theta  \omega)= \xi( \omega)$ quasi-surely and 
$( \theta^{-1}A_\alpha)\bigtriangleup  A_\alpha\subset \{ \omega:\xi( \theta \omega)\ne \xi( \omega)\}$, we have $ \E {\rm I}_{( \theta^{-1}A_\alpha)\bigtriangleup  A_\alpha}=0$. By assumption that $ \theta$ is ergodic and Theorem \ref{hz5}, we know that $
 \E [{\rm I}_{A_\alpha}]=0$ or $ \E [{\rm I}_{A_\alpha^c}]=0$. Thus $
 \E [{\rm I}_{A_\alpha}]=0$ or $1$. Let $J:=\{\alpha:  \E [{\rm I}_{A_\alpha}]=0\}$.  By regular property of $ \E$, we have 
$$0= \E [{\rm I}_{\{ \omega: \xi( \omega)=\infty\}}]= \E [{\rm I}_{\cap_{n=1}^\infty A_n}]=\lim_{n\to \infty}  \E [{\rm I}_{A_n}].$$
Thus there exists $n\in \mathbb N$ such that $ \E [{\rm I}_{A_n}]=0$, that is $n\in J$ which implies $J\neq \emptyset$. So set $\alpha_*=\inf J$ and immediately $\alpha_*\in J$ by monotone (increasing) convergence of sublinear expectation. Hence for any $\alpha>\alpha_*$, we have $ \E [{\rm I}_{A_\alpha}]=0$, and for any $\alpha<\alpha_*$, we have $ \E [{\rm I}_{A_\alpha}]=1$ and $ \E [{\rm I}_{A^c_{\alpha}}]=0$ by ergodicity. By monotone (increasing) convergence of sublinear expectation again, we have $ \E [{\rm I}_{\{ \omega:\ \xi( \omega)<\alpha_*\}}]=0$. Combining $ \E [{\rm I}_{\{ \omega:\ \xi( \omega)>\alpha_*\}}]=0$ and the subadditivity of $ \E$, we have $ \E [{\rm I}_{\{ \omega:\ \xi( \omega)\neq\alpha_*\}}]=0$. 
Thus $\xi$ is constant quasi-surely.

 From the proof (i)$\Rightarrow$(iii), we can see that the regular assumption is used to prove $J\neq \emptyset$. But if $\xi$ is bounded quasi-surely, this is true automatically. So we do not need the regular assumption to obtain the equivalence of (i) and (ii) . 
\end{proof}

We give the definition of the
strong law of large numbers (SLLN).

 \begin{defi}\label{zhao524}
A dynamical system $ S=\{ \Omega,  \F,  \E,({ \theta}^n)_{n\in {\mathbb N}}\}$ is said to satisfy the
strong law of large numbers (SLLN) if for any bounded measurable function $\xi$, there exists a constant $c$ such that 
\begin{eqnarray}\label{hz15}
\lim_{N\to \infty} {1\over N} \sum _{n=0}^{N-1}\xi ({ \theta}^n\omega)=c, \ {\rm quasi-surely}.
\end{eqnarray}
\end{defi}
\begin{rmk}
(i) The SLLN in general may have random limit. But here we are interested in the relationship between SLLN and ergodicity. So in this paper, we define SLLN in a strong sense as that with the limit being constant.

(ii) In fact, it will be shown that the ergodicity and the SLLN are equivalent if $ \E$ is regular.
Without the regularity assumption, the SLLN still implies ergodicity, but it is not clear whether the vice versa is true. Thus, unlike the classical case, SLLN may not be used as  the definition of the dynamical system $\{\theta^n\}_{n\in \N}$ being ergodic unless it is regular.

\end{rmk}

Let $L_b( \F)$ be the space of all $ {\cal F}$-measurable real-valued
functions such that 
$\sup_{ \omega\in \Omega} |X( \omega)|<\infty$. As $U_11=1$ by definition of $U_1$, so it is obvious that $1$ is an eigenvalue of $U_1: {L}_b\to {L}_b$. The following result is almost obvious, but fundamental.
\begin{thm}\label{hz10}
If $ S$ satisfies SLLN, then the eigenvalue $1$ of $U_1$ on $L_b$ is simple and $\hat\theta$ is ergodic.
\end{thm}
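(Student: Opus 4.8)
The plan is to extract both conclusions almost for free from the equality case of the SLLN, equation (\ref{hz12}). The whole statement is the ``trivial direction'': for functions that are \emph{already} invariant the time average collapses to the function itself, while the SLLN forces it to equal the spatial mean.

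First I would prove that the eigenvalue $1$ is simple, which is exactly the assertion that every fixed point of $U_1$ in $L^1$ is constant quasi-surely. So take $\xi\in L^1$ with $U_1\xi=\xi$, i.e. $\xi(\hat\theta\hat\omega)=\xi(\hat\omega)$. Iterating this identity gives $\xi(\hat\theta^m\hat\omega)=\xi(\hat\omega)$ for every $m\in\N$, so the Ces\`aro averages degenerate:
\[
\frac1n\sum_{m=0}^{n-1}\xi(\hat\theta^m\hat\omega)=\xi(\hat\omega)\quad\text{for every }n.
\]
On the other hand, since $\xi$ satisfies $\xi(\hat\theta\hat\omega)=\xi(\hat\omega)$ quasi-surely, the equality case (\ref{hz12}) of the SLLN asserts that this same average converges quasi-surely to $\hat\E\xi$. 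Comparing the two, $\xi=\hat\E\xi$ quasi-surely, a constant. Hence the eigenspace of $U_1$ for $1$ consists only of constants, i.e. $1$ is simple. (For complex-valued $\xi$ one applies the argument to the real and imaginary parts separately.)

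For ergodicity I would observe that the property just established is precisely statement (iv) of Theorem \ref{hz16}, so by the implication (iv)$\Rightarrow$(i) proved there (which uses only that $\hat\theta$ is expectation preserving, with no regularity hypothesis) the map $\hat\theta$ is ergodic. If a self-contained argument is preferred, let $B\in\hat\F$ be invariant, $\hat\theta^{-1}B=B$; then both ${\rm I}_B$ and ${\rm I}_{B^c}$ lie in $L^1$ and are fixed by $U_1$, so by simplicity ${\rm I}_B=\hat\E{\rm I}_B$ and ${\rm I}_{B^c}=\hat\E{\rm I}_{B^c}$ quasi-surely, the two limits being constants. Since ${\rm I}_B$ and ${\rm I}_{B^c}$ take only the values $0$ and $1$ and the ambient space has positive capacity ($\hat\E 1=1$), each of these constants lies in $\{0,1\}$; because ${\rm I}_B+{\rm I}_{B^c}=1$ they sum to $1$, and therefore one of them vanishes. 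That is, $\hat\E{\rm I}_B=0$ or $\hat\E{\rm I}_{B^c}=0$, which is ergodicity.

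As the remark preceding the theorem signals, there is no genuine obstacle: the content is entirely the invariant case of the SLLN. The only steps deserving a word of care are (a) checking that the equality case (\ref{hz12}) applies exactly to invariant $\xi$ and returns the spatial mean $\hat\E\xi$ as the limit, and (b) the elementary passage from quasi-sure constancy of the $\{0,1\}$-valued indicators to the capacity statement, where $\hat\E 1=1$ is used to rule out a degenerate constant.
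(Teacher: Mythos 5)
Your proposal is correct and follows essentially the same route as the paper: for $U_1\xi=\xi$ the Ces\`aro averages are identically $\xi$, the equality case (\ref{hz12}) of the SLLN forces $\xi=\hat\E\xi$ quasi-surely, hence $1$ is a simple eigenvalue, and ergodicity then follows by invoking Theorem \ref{hz16} exactly as the paper does. Your optional self-contained argument for the ergodicity step is also sound and merely inlines the (iv)$\Rightarrow$(i) reasoning of Theorem \ref{hz16}, correctly using $\hat\E 1=1$ to exclude a degenerate constant.
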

\begin{proof}
Consider $\xi$ that satisfies
$$U_1\xi=\xi$$
 and $\xi$ is a bounded measurable r.v. 
  Thus by the SLLN assumption, we have
 $\xi$ is constant quasi-surely. Therefore the eigenvalue $1$ of $U_1$ is simple. Finally by Theorem \ref{hz16},
  $ \theta$ is ergodic. 
\end{proof}

We now investigate the converse part of Theorem {\ref{hz10}. For this we study the Birkhoff's ergodic theorem under sublinear expectation. Before doing this, we need the following lemma. The expectation preserving property of $ \theta$ is not required in Lemma \ref{lem2.13} and Lemma \ref{zhao523}.

\begin{lem} (Maximal ergodic lemma)\label{lem2.13} Let $\xi\in L^1( \Omega)$, $\xi_j( \omega)=\xi( \theta_j \omega)$, and $S_0=0,$
\begin{eqnarray}
&&S_k( \omega)=\xi_0( \omega)+\dots+\xi_{k-1}( \omega), \ {\rm for}\ k\geq 1,\label{eqn2.21}\\
&&M_k( \omega)=\max_{0\leq j\leq k} S_j( \omega).\label{eqn2.22}
\end{eqnarray}
Then for $k\geq 1$,
$$ \E[\xi {\rm I}_{\{M_k( \omega)>0\}}]\geq 0.$$
\end{lem}

\begin{proof} The proof is similar to the case of linear expectation given by Garsia (1965), so omitted here.
\end{proof}

We call a random variable $\xi$ has no mean uncertainty under $ \E$ if $ \E[\xi]=- \E[-\xi]$. Define the space for some $p\geq 1$, $L^p(\Omega):=\{\xi\in {\cal D}: \E|X|^p<\infty\}$,
$${\cal H}^p:=\{\xi\in L^p ( \Omega) : \xi\ {\rm has\ no\ mean\ uncertainty}\},$$
and
$${\cal H}_{\mathbb C}^p:=\{\xi\in L_{\mathbb C}^p ( \Omega) : \xi\ {\rm has\ no\ mean\ uncertainty}\}.$$
We have the following lemma which will be used later. Note here we do not need the regularity assumption.
\begin{lem}\label{lem2.10}
The space ${\cal H}^p$ (and ${\cal H}_{\mathbb C}^p$) is a Banach space.
\end{lem}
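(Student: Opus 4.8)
The claim is that $\mathcal{H}^p$ (and its complex analogue $\mathcal{H}^p_{\mathbb{C}}$) is a Banach space. Since $L^p(\hat\Omega)$ is already a Banach space with norm $\|\xi\|_{L^p}=(\hat{\E}[|\xi|^p])^{1/p}$, the natural strategy is to show that $\mathcal{H}^p$ is a \emph{closed linear subspace} of $L^p(\hat\Omega)$, and then invoke the standard fact that a closed subspace of a Banach space is itself Banach under the inherited norm.

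**Plan.** The plan is to verify two things: (a) $\mathcal{H}^p$ is a linear subspace, and (b) $\mathcal{H}^p$ is closed in the $L^p$-norm. For (a), I would first check it is closed under scalar multiplication. The condition defining $\mathcal{H}^p$ is the ``no mean-uncertainty'' identity $\hat{\E}[\xi]=-\hat{\E}[-\xi]$. For $\lambda\geq 0$, positive homogeneity (property (iv) of Definition \ref{zhao522}) gives $\hat{\E}[\lambda\xi]=\lambda\hat{\E}[\xi]=-\lambda\hat{\E}[-\xi]=-\hat{\E}[-\lambda\xi]$; for $\lambda<0$ one writes $\lambda\xi=|\lambda|(-\xi)$ and uses that $-\xi$ also lies in $\mathcal{H}^p$ (which follows immediately by symmetry of the defining identity). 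For additivity, take $\xi,\eta\in\mathcal{H}^p$ and use sub-additivity (property (iii)) twice: $\hat{\E}[\xi+\eta]\leq\hat{\E}[\xi]+\hat{\E}[\eta]$ and likewise $\hat{\E}[-\xi-\eta]\leq\hat{\E}[-\xi]+\hat{\E}[-\eta]$. Adding the no-mean-uncertainty identities for $\xi$ and $\eta$ and combining with the two sub-additivity inequalities should pin down $\hat{\E}[\xi+\eta]=-\hat{\E}[-(\xi+\eta)]$, since the reverse inequality $\hat{\E}[\xi+\eta]\geq -\hat{\E}[-(\xi+\eta)]$ always holds (it is a consequence of sub-additivity applied to $\xi+\eta$ and $-(\xi+\eta)$, giving $0=\hat{\E}[0]\leq\hat{\E}[\xi+\eta]+\hat{\E}[-(\xi+\eta)]$).

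**The closedness step, which I expect to be the main obstacle.** For (b), suppose $\xi_n\in\mathcal{H}^p$ and $\xi_n\to\xi$ in $L^p$. I must show $\xi\in\mathcal{H}^p$, i.e. $\hat{\E}[\xi]=-\hat{\E}[-\xi]$. The key tool is that the functional $\xi\mapsto\hat{\E}[\xi]$ is continuous (indeed Lipschitz) with respect to the $L^p$-norm: by sub-additivity and monotonicity, $|\hat{\E}[\xi]-\hat{\E}[\eta]|\leq\hat{\E}[|\xi-\eta|]\leq\|\xi-\eta\|_{L^p}$ (the last inequality by Jensen/H\"older for $p\geq 1$, since $\hat{\E}[|\zeta|]\leq(\hat{\E}[|\zeta|^p])^{1/p}$). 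The hard part will be establishing this $L^1$-$L^p$ bound cleanly and then arguing that both $\hat{\E}[\xi_n]\to\hat{\E}[\xi]$ and $\hat{\E}[-\xi_n]\to\hat{\E}[-\xi]$. Once continuity is in hand, passing to the limit in $\hat{\E}[\xi_n]=-\hat{\E}[-\xi_n]$ yields $\hat{\E}[\xi]=-\hat{\E}[-\xi]$, so $\xi\in\mathcal{H}^p$ and $\mathcal{H}^p$ is closed.

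**Conclusion and the complex case.** Having shown $\mathcal{H}^p$ is a closed linear subspace of the Banach space $L^p(\hat\Omega)$, it is complete in the inherited norm and hence Banach. For $\mathcal{H}^p_{\mathbb{C}}$, the same argument applies verbatim once the defining identity is interpreted componentwise on real and imaginary parts, using that $L^p_{\mathbb{C}}(\hat\Omega)$ is Banach and the norm controls each coordinate; alternatively one identifies $\mathcal{H}^p_{\mathbb{C}}$ with the set of complex combinations built from $\mathcal{H}^p$ and checks closedness directly via the same continuity estimate. I expect the linearity verification to be routine, with the genuine content concentrated in the continuity estimate $|\hat{\E}[\xi]-\hat{\E}[\eta]|\leq\|\xi-\eta\|_{L^p}$ that drives closedness.
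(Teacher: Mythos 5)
Your proposal is correct and takes essentially the same route as the paper's own proof: linearity of ${\cal H}^p$ via sub-additivity together with the automatic inequality $\hat \E[\xi+\eta]+\hat \E[-(\xi+\eta)]\geq 0$, closedness by passing to the limit in $\hat \E[\xi_n]=-\hat \E[-\xi_n]$ using the Lipschitz-type estimate $|\hat \E[\xi]-\hat \E[\eta]|\leq \hat \E[|\xi-\eta|]\leq \|\xi-\eta\|_{L^p}$ (which the paper uses implicitly, and which is justified by the representation $\hat \E=\sup_{P\in{\cal P}}E_P$ in (\ref{zhao521})), and reduction of the complex case to the real case by components. No gap to report.
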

\begin{proof}
First note ${\cal H}^p$ (${\cal H}_{\mathbb C}^2$) is a linear subspace of $L^p ( \Omega)$ ($L_{\mathbb C}^p ( \Omega)$). We only need to prove the real valued random variable case. To see this, assume $\xi_1, \xi_2\in L^p ( \Omega)$ satisfy
 $$ \E[\xi_1]=-\E[-\xi_1], \  \E[\xi_2]=- \E[-\xi_2],$$
then by the sublinearity of $\hat\E$
\begin{eqnarray*}
 \E[\xi_1+\xi_2]\leq \E[\xi_1]+ \E[\xi_2]=- \E[-\xi_1]- \E[-\xi_2]\leq - \E[-(\xi_1+\xi_2)].
\end{eqnarray*}
So
$$ \E[\xi_1+\xi_2]+ \E[-(\xi_1+\xi_2)]\leq 0.$$
But
$$ \E[\xi_1+\xi_2]+ \E[-(\xi_1+\xi_2)]\geq 0.$$
Therefore
$$ \E[\xi_1+\xi_2]+ \E[-(\xi_1+\xi_2)]=0,$$
i.e. $\xi_1+\xi_2$ has no mean-uncertainty. Since $\xi_2$ has no mean-uncertainty, so does $-\xi_2$. Thus from what we have proved, we conclude that $\xi_1-\xi_2$ has no mean-uncertainty.

Consider $\lambda_1, \lambda_2>0$. Note $ \E[\lambda_1\xi_1]=\lambda_1 \E[\xi_1]$ and $ \E[-\lambda_1\xi_1]=\lambda_1 \E[-\xi_1]$. Thus if $\xi_1$ has no mean-uncertainty, so does $\lambda_1\xi_1$. Similarly if $\xi_2$ has no mean-uncertainty, so does $\lambda_2\xi_2$. Then by what we have proved, $\lambda_1\xi_1+\lambda_2\xi_2$ has no mean-uncertainty. Now when $\lambda_1>0, \lambda_2<0$, if $\xi_1$ and $\xi_2$ have no mean-uncertainty, then $\lambda_1\xi_1$ and $-\lambda_2\xi_2$ have no mean-uncertainty. Hence $\lambda_2\xi_2$ has no mean-uncertainty. Thus $\lambda_1\xi_1+\lambda_2\xi_2$ have no mean-uncertainty. This claim is also true for $\lambda_1<0, \lambda_2>0$ and $\lambda_1, \lambda_2<0$. Therefore  $\lambda_1\xi_1+\lambda_2\xi_2\in {\cal H}^p$.

Assume $\xi_n\in {\cal H}^p$ is a Cauchy sequence and with the limit $\xi\in {L}^p( \Omega)$, i.e.
\begin{eqnarray}\label{4.1}
\lim_{n\to 0} \E|\xi-\xi_n|^p=0.
\end{eqnarray}
 Then let us show that $\xi$ also has no mean-uncertainty. In fact,
\begin{eqnarray*}
 \E[\xi]&\leq&  \E[\xi-\xi_n]+ \E[\xi_n]\\
&=&  \E[\xi-\xi_n]- \E[-\xi_n]\\
&\leq &  \E[\xi-\xi_n]+ \E[-\xi+\xi_n]- \E[-\xi].
\end{eqnarray*}
Then let $n\to \infty$, we know the first two terms in above will go to $0$ because of (\ref{4.1}). Thus $ \E[\xi]\leq - \E[-\xi]$. But $ \E[\xi]\geq - \E[-\xi]$, so $ \E[\xi]= - \E[-\xi]$, i.e. $\xi$ has no mean-uncertainty so that $\xi\in {\cal H}^p$.
\end{proof}

The following result is an extension of the Birkhoff ergodic theorem to the case of sublinear expectation with the regularity assumption and the representation as upper integrals. Let ${\cal I}\subset  {\cal F}$ be the collection of such sets $A$ such that $\E{\rm I}_{( \theta^{-1}A)\bigtriangleup A}=0$. It is easy to check that ${\cal I}$ is a $\sigma$-field and $X\in {\cal I}$ iff $X(\theta\omega)=X(\omega)$ quasi-surely. Therefore, for any $\xi\in L^1( \Omega )$ and each $P\in {\cal P}$, as $E_P[\xi |{\cal I}]$ is ${\cal I}$-measurable, so $E_P[\xi |{\cal I}] ( \omega)=E_P[\xi |{\cal I}] ( \theta  \omega)$ quasi-surely. Define $\bar \xi^*, \underline \xi^* $ to be ${\cal I}$-measurable random variables such that 
\begin{eqnarray*}
\underline \xi^*\leq E_P[\xi |{\cal I}] \leq \bar \xi^*,
\end{eqnarray*}
quasi-surely for each $P\in {\cal P}$.  The proof of the following lemma is given in the Appendix. 
 
\begin{lem}\label{zhao523}
Assume $ \E$ is regular and has the representation as upper integrals. Then for any $\xi\in L_b$ and $\epsilon>0$,
\begin{eqnarray}\label{hz11}
\bar \xi ( \omega ):=\limsup _{n\to \infty}{1\over n} \sum_{m=0}^{n-1} \xi({ \theta}^m \omega)\leq \bar \xi^*( \omega )
+\epsilon,\ v-a.s.,
\end{eqnarray}
and
\begin{eqnarray}\label{hz13}
{\underline \xi}( \omega):=\liminf _{n\to \infty}{1\over n} \sum_{m=0}^{n-1} \xi({ \theta}^m \omega)\geq 
\underline \xi^*( \omega)-\epsilon ,\ v-a.s.
\end{eqnarray}
and ${\bar \xi }( \omega)$ and $\underline \xi( \omega)$ satisfy ${\bar \xi }( \theta \omega)={\bar \xi }( \omega)$ and ${\underline \xi }( \theta \omega)={\underline \xi }( \omega)$.
\end{lem}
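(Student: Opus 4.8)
The plan is to prove the two inequalities by a localised maximal ergodic argument, deducing the claimed $v$-null estimates from the Maximal ergodic lemma together with strong regularity. I focus on \eqref{hz11}; the bound \eqref{hz13} will follow by applying the same reasoning to $-\xi$, since $\limsup_n\frac1n\sum_{m=0}^{n-1}(-\xi)(\hat\theta^m\hat\omega)=-\underline\xi$ and the hypothesis $-E_P[-\xi|{\cal I}]\geq\underline\xi^*$ says that $-\underline\xi^*$ dominates $E_P[-\xi|{\cal I}]$ for every $P$, so it plays for $-\xi$ the role that $\bar\xi^*$ plays for $\xi$. First I would fix $\epsilon>0$, set $g:=\xi-\bar\xi^*-\epsilon\in L^1(\hat\Omega)$, and put $A:=\{\hat\omega:\bar\xi(\hat\omega)>\bar\xi^*(\hat\omega)+\epsilon\}$. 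A telescoping Ces\`aro computation shows $\bar\xi(\hat\theta\hat\omega)=\bar\xi(\hat\omega)$ quasi-surely (this also settles the last assertion of the lemma, and, applied to $-\xi$, the corresponding statement for $\underline\xi$); since $\bar\xi^*$ is ${\cal I}$-measurable, hence $\hat\theta$-invariant quasi-surely, it follows that $\hat\E{\rm I}_{(\hat\theta^{-1}A)\Delta A}=0$, i.e. $A\in{\cal I}$. Writing $\bar g:=\limsup_n\frac1n\sum_{m=0}^{n-1}g(\hat\theta^m\hat\omega)=\bar\xi-\bar\xi^*-\epsilon$, we have $A=\{\bar g>0\}$, and the whole task reduces to proving $v(A)=0$.

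Next I would run the Maximal ergodic lemma on the localised integrand $\eta:=g{\rm I}_A$. Because $A$ is invariant, ${\rm I}_A(\hat\theta^m\hat\omega)={\rm I}_A(\hat\omega)$ quasi-surely, so the partial sums obey $S_j^{\eta}={\rm I}_A\,S_j^{g}$ and therefore $\{M_k>0\}=A\cap\{\max_{1\leq j\leq k}S_j^{g}>0\}$, which increases, as $k\to\infty$, to $A\cap\{\sup_{j\geq1}S_j^{g}>0\}=A$ (on $A$ the $\limsup$ of the averages is positive, forcing $S_j^{g}>0$ for infinitely many $j$). The lemma gives $\hat\E[g\,{\rm I}_{\{M_k>0\}}]\geq0$ for every $k$, and the crux is to pass to the limit and conclude $\hat\E[g\,{\rm I}_A]\geq0$.

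Here the lack of dominated convergence is the main obstacle, and this is exactly the point where strong regularity is indispensable. I would estimate, by sub-additivity, $|\hat\E[g\,{\rm I}_A]-\hat\E[g\,{\rm I}_{\{M_k>0\}}]|\leq\hat\E[|g|\,{\rm I}_{A\setminus\{M_k>0\}}]$, split $|g|=(|g|\wedge N)+(|g|-N)^+$, bound the truncated part by $N\,\hat\E{\rm I}_{A\setminus\{M_k>0\}}$, and let $k\to\infty$ using $A\setminus\{M_k>0\}\downarrow\emptyset$ together with the strong regularity of Definition \ref{zhao41}; then let $N\to\infty$ using $\hat\E[(|g|-N)^+]\to0$ for $g\in L^1$. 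This yields $\hat\E[g\,{\rm I}_A]\geq0$.

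Finally I would contradict this lower bound with an upper bound drawn from the representation \eqref{zhao521}. For each $P\in{\cal P}$, since $A\in{\cal I}$ the tower property gives $E_P[\xi\,{\rm I}_A]=E_P[E_P[\xi|{\cal I}]\,{\rm I}_A]\leq E_P[\bar\xi^*\,{\rm I}_A]$, using $E_P[\xi|{\cal I}]\leq\bar\xi^*$ quasi-surely; hence $E_P[g\,{\rm I}_A]=E_P[\xi\,{\rm I}_A]-E_P[\bar\xi^*\,{\rm I}_A]-\epsilon P(A)\leq-\epsilon P(A)$. Taking the supremum over $P$ gives $\hat\E[g\,{\rm I}_A]=\sup_{P\in{\cal P}}E_P[g\,{\rm I}_A]\leq-\epsilon\inf_{P\in{\cal P}}P(A)=-\epsilon\,v(A)$. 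Combined with $\hat\E[g\,{\rm I}_A]\geq0$ this forces $v(A)=0$, which is precisely $\bar\xi\leq\bar\xi^*+\epsilon$ $v$-a.s., establishing \eqref{hz11}; the bound \eqref{hz13} follows from the symmetric argument applied to $-\xi$ with dominating function $-\underline\xi^*$, as indicated above.
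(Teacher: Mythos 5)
Your proposal is correct and follows essentially the same route as the paper's own proof: your localised integrand $\eta=(\xi-\bar\xi^*-\epsilon){\rm I}_A$ is exactly the paper's $\xi^*$ on $D$, the identification $\{M_k>0\}\uparrow A$ is the paper's $F=D$, and the conclusion via the maximal ergodic lemma, strong regularity, and the tower property under each $P\in{\cal P}$ (giving $0\leq\hat\E[g\,{\rm I}_A]\leq-\epsilon\,v(A)$, i.e. $\hat\E[-{\rm I}_D]=0$) matches the paper step for step, as does the symmetric treatment of $-\xi$ for \eqref{hz13}. Your truncation argument $|g|=(|g|\wedge N)+(|g|-N)^+$ in the limit passage is in fact slightly more careful than the paper's, which invokes strong regularity directly on $\hat\E[(\xi^*)^-{\rm I}_{F\setminus F_n}]$ even though Definition \ref{zhao41} is stated only for indicators.
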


 We use the notation of capacities from Chen \cite{chen} and Denis-Hu-Peng \cite{denis}. Under the assumption of the representation as upper integrals, we define a pair
$({\mathbb V}, v)$ of capacities by
$${\mathbb V}(A):=\sup_{P\in {\cal P}}P(A)=\E [I_ A],\ {v}(A) :=\inf_{P\in {\cal P}}P(A)=-\E [-I_A],\ {\rm for\ any}\ A\in  {\cal F},$$
which are called upper probability and lower probability. 
  A set function $\mu: {\cal  F}\to [0,1]$ is called continuous if $\lim_{n\to \infty} \mu (A_n)=\mu(A)$ when either $A_n\uparrow A$ or $A_n\downarrow A$ (Cerreia-Vioglio-Maccheroni-Marinacci \cite{cerreia}).

We need the following lemma from Cerreia-Vioglio-Maccheroni-Marinacci \cite{cerreia}. 

\begin{lem}\label{pr7}
Let $v$ be a continuous lower probability on $(\Omega,\mathcal{F})$. If $v$ is  $ \theta$-invariant, then for any bounded $\mathcal{F}$-measurable random variable $\xi$,
$$v\left(\left\{\omega:\ \lim_{n\to\infty}{1\over n}\sum_{k=0}^{n-1}\xi( \theta^{k}(\omega)) \hbox{ exists }\right\}\right)=1.$$
\end{lem}
This tells us that if $\E$ is regular and $ \theta$-invariant, we have 
$\lim\limits_{n\to\infty}{1\over n}\sum_{k=0}^{n-1}\xi( \theta^{k}(\omega))$ exists quasi-surely for any bounded measurable random variable $\xi$.

\begin{thm}\label{thm4.6} Assume $ \E$ is regular, $ \theta$-invariant and has the representation as upper integrals. If the dynamical system $ S$ is ergodic, then
SLLN holds and the constant in (\ref{hz15}) satisfies $c\in [- \E(-\xi), \E(\xi)]$.
\end{thm}
\begin{proof}
As $ \theta$ is $ \E$ preserving, so it is $v$-preserving. Moreover, $ \E$ is regular, by Lemma \ref{pr7}, we know that $\bar\xi ( \omega ):=\lim\limits_{n\to \infty}{1\over n} \sum_{m=0}^{n-1} \xi({ \theta}^m \omega)$ exists quasi-surely for any bounded measurable r.v. $\xi$ and ${\bar \xi }( \omega)$ satisfies ${\bar \xi }( \theta \omega)={\bar \xi }( \omega)$. As the dynamical system $ S$ is ergodic, so $\bar \xi=c$ is a constant. The SLLN is proved. 

On the other hand, we for any $P\in {\cal P}$, $E_P[\xi|{\cal I}]$ is ${\cal I}$-measurable, so $E_P[\xi |{\cal I}] ( \omega)=E_P[\xi |{\cal I}] ( \theta  \omega)$ quasi-surely. As  $ S$ is ergodic, by Theorem \ref{hz16}, $E_P[\xi |{\cal I}] $ is constant quasi-surely. Thus for any $P\in {\cal P}$ and any bounded measurable r.v. $\xi$,
\begin{eqnarray*}
E_P[\xi|{\cal I}]=E_P(\xi)\leq  \E(\xi),
\end{eqnarray*}
 and 
 \begin{eqnarray*}
-E_P[-\xi|{\cal I}]=-E_P(-\xi)\geq - \E(-\xi), 
\end{eqnarray*}
quasi-surely. Thus we can take $\bar \xi^*= \E[\xi]$ and $\underline \xi^*=- \E[-\xi]$, by Lemma \ref{zhao523},  
\begin{eqnarray*}
- \E [-\xi]-\epsilon 
\leq  \bar \xi ( \omega ):=\lim _{n\to \infty}{1\over n} \sum_{m=0}^{n-1} \xi({ \theta}^m \omega)\leq  \E [\xi] +\epsilon,\  v-a.s.
\end{eqnarray*}
As $\bar\xi=c$ is a constant,  so it is obvious that $c\in [- \E(-\xi),\E(\xi)]$.
\end{proof}
\begin{rmk}
An SLLN with $c\in \left[\int_{\Omega}\xi \mathrm{d}v,\int_{\Omega}\xi \mathrm{d}V\right]$ in the case of an upper and lower probability setup was obtained in  Feng-Wu-Zhao \cite{feng-wu-zhao}, where 
$\int_{\Omega}\xi \mathrm{d}v,\int_{\Omega}\xi \mathrm{d}V$ are Choquet integrals.
It is noted that the bound obtained in Theorem \ref{thm4.6} is  better as $ [- \E(-\xi), \E(\xi)]\subset \left[\int_{\Omega}\xi \mathrm{d}v,\int_{\Omega}\xi \mathrm{d}V\right]$.
This can be easily seen due to the well-known fact that from definition of Choquet integral
\begin{eqnarray*}
\int_{\Omega}\xi \mathrm{d}V&=&\int _0^{\infty}V(\omega: \xi(\omega)\geq t)dt+\int _{-\infty}^0\bigg(V(\omega: \xi(\omega)\geq t)-1\bigg)dt\\
&=&\int _0^{\infty}\sup_{P\in {\cal P}}P(\omega: \xi(\omega)\geq t)dt+\int _{-\infty}^0\bigg(\sup_{P\in {\cal P}}P(\omega: \xi(\omega)\geq t)-1\bigg)dt\\
&\geq&\sup_{P\in {\cal P}}\int _0^{\infty}P(\omega: \xi(\omega)\geq t)dt+\sup_{P\in {\cal P}}\int _{-\infty}^0\bigg(P(\omega: \xi(\omega)\geq t)-1\bigg)dt\\
&\geq&\sup_{P\in {\cal P}}\left[\int _0^{\infty}P(\omega: \xi(\omega)\geq t)dt+\int _{-\infty}^0\bigg(P(\omega: \xi(\omega)\geq t)-1\bigg)dt\right]\\
&=&\sup_{P\in {\cal P}}\int _{\Omega }\xi dP\\
&=&  \E(\xi).
\end{eqnarray*}
Similarly one can prove that
\begin{eqnarray*}
\int_{\Omega}\xi \mathrm{d}v
&\leq & - \E(-\xi).
\end{eqnarray*}
\end{rmk}
\section{Sublinear Markovian systems and their ergodicity: the general setting}\label{zhao1751}

Consider a measurable space $(\Omega,{\cal F})$ with a similar notation such as ${\cal D}=L_b({\cal F})$ as in Section 2.
Let $(\Omega, {\cal D}, \E)$ a sublinear expectation space where $\E[\cdot]$ is a sublinear expectation on $L_b(\cal F)$. Denote by $C_{b,lip}(\R^d)$ be 
the space of real-valued bounded Lipschitz continuous functions on $\R^d$, $C_b(\R^d)$ the space of real-valued bounded continuous functions on $\R^d$. We denote by $L_b(\B(\R^d))$, the space of $\B(\R^d)$-measurable real-valued functions defined on $\R^d$ such that
$\sup_{x\in \R^d}|\varphi (x)|<\infty.$ Let $\xi\in (L_b({\cal F}))^{\otimes d}$ be given. The sublinear distribution of $\xi$ under $\E[\cdot]$ is defined by
$$T[\varphi]:=\E[\varphi(\xi)], \ \varphi\in L_b(\B(\R^d)).$$
This distribution $T[\cdot]$ is again a sublinear expectation defined on $L_b(\B(\R^d))$.
Denote by $S(d)$ the collection of symmetric $d\times d$ matrices and $S_+(d)$ the collection of positive definite symmetric $d\times d$ matrices.

Consider a family of sublinear expectations parameterized by $t\in \R^+$:
$$T_t: L_b(\B(\R^d))\to L_b(\B(\R^d)), \ {\rm t\geq 0}.$$
\begin{defi} (Peng \cite{peng2005})
The operator $T_t$ is called a sublinear Markov semigroup if it satisfies\\
(m1) For each fixed $(t,x)\in \R^+\times \R^d$, $T_t[\varphi](x)$ is a sublinear expectation defined on  $L_b(\B(\R^d))$;\\
(m2) $T_0[\varphi](x)=\varphi(x)$;\\
(m3) $T_t[\varphi](x)$ satisfies the following Chapman semigroup formula
$$(T_t\circ T_s)[\varphi]=T_{t+s}[\varphi],\ t,s\geq 0.$$
\end{defi}

There are many examples of sublinear Markov semigroups. We list some of them here,
though they were already
known, for the completeness and an aid to understand
the problem we address here.

\begin{exmp} (Lions \cite{lions}) Consider the Hamilton-Jacobi-Bellman equation:
\begin{eqnarray}
\begin{cases} {\partial\over \partial t}u&=\sup\limits _{v\in V}\{\sum\limits_{i,j=1}^d a_{ij}(x,v){\partial ^2\over \partial x_i\partial x_j}u+\sum _{i=1}^d
b_i(x,v){\partial\over \partial x_i}u\},
\cr
u(0,\cdot)&=\varphi(\cdot)\in C_b({\mathbb R}^d).
\end{cases}
\end{eqnarray}
Here $a: {\mathbb R}^d\times {\mathbb R}^k\to S(d)$ and  $b: {\mathbb R}^d\times {\mathbb R}^k\to {\mathbb R}^{d}$ are bounded and uniformly continuous functions,   and uniformly Lipschitz in $x$, $V$ is a closed and bounded subset of ${\mathbb R}^k$. Under the notion of viscosity solutions, this equation has a unique
solution $u(t,x)$ in $C_b({\mathbb R}^d)$ with initial value $\varphi$. Set
\begin{eqnarray*}
({T}_t\varphi)(x):=u(t,x), \ \ x\in {\mathbb R}^d.
\end{eqnarray*}
This defines a sublinear Markov semigroup.
\end{exmp}

\begin{exmp}(Peng \cite{peng2010})\label{example3.3}
Let $G:S(d)\to {\mathbb R}$ be a given sublinear  function which is monotonic on $S(d)$. Then there exists a bounded, convex and closed subset $\sum\subset S_+(d)$ such that
\begin{eqnarray*}
G(A)=\sup \limits _{B\in \sum }[{1\over 2}{\rm tr}(AB)], \ {\rm for } \ A\in S(d).
    \end{eqnarray*}
    Define $\Omega=C_0(\R^+, \R^d)$, the space of all $\R^d$-valued continuous functions $(\omega_t)_{t\in \R^+}$, with $\omega_0=0$, equipped with the distance
$$\rho (\omega^{1},\omega^{2}):=\sum_ {i=1}^\infty 2^{-i} [\max_{t\in [0,i]}|\omega_t^{1}-\omega_t^{2}|\wedge 1]$$
with ${\cal F}=\B (C_0(\R^+,\R^d))$. Let
$$L_{ip}({\Omega}):= \{\varphi(\omega_{t_1},\omega_{t_2},\dots, \omega_{t_m}), {\rm\ for \ any}\ m\geq 1, \ t_1, t_2,\dots,t_m\in \R^+, \varphi\in C_{b,Lip}((\R^d)^m) \}.$$
Then 
the $G$-normal distribution 
$N(\{0\}\times \sum)$
on $(\Omega,L_{ip}(\Omega))$ exists, i.e. there exists a $d$-dimensional random vector $X$ on a sublinear expectation space $(\Omega, {\cal D},\E)$ satisfying 
\begin{eqnarray*}
aX+b\bar X
=^{\hskip-7pt d}\sqrt{a^2+b^2}X, \ {\rm for}\ a, b\geq 0,
\end{eqnarray*}
where $\bar X$ is an independent copy of $X$ and $G(A)=\E [{1\over 2} \langle AX, X\rangle]$.
 It was proved in Theorem 2.5 in Chapter VI in 
Peng \cite{peng2010} that there exists a weakly compact family of probability measures ${\cal P}$ on $(\Omega,{\cal B}(\Omega))$ such that 
\begin{eqnarray*}
\E[X]=\max_{P\in \cal P}E_{P}[X], {\rm  \ for \ } X\in L_{ip}(\Omega).
\end{eqnarray*}
Its canonical path is $G$-Brownian motion $\{B_t\}_{t\geq 0}$ on a sublinear expectation space $(\Omega, {\cal D},\E)$ satisfying 

   (i). $B_0(\omega )=0$;

   (ii). For each $t,s\geq 0$, the increment $B_{t+s}-B_t$ is $N(\{0\}\times s\sum)$ distributed and independent of
   $(B_{t_1},B_{t_2},\dots,B_{t_n})$, for each $n\in {\mathbb N}$ and $0\leq t_1\leq t_2\leq\dots\leq t_n\leq t$.

   For each fixed $\varphi\in C_{b,Lip}({\mathbb R}^d)$, the function
   \begin{eqnarray}
   u(t,x):=\E\varphi(x+B_t), \ (t,x)\in [0,\infty)\times {\mathbb R}^d,
   \end{eqnarray}
   is the viscosity solution of the following $G$-heat equation
   \begin{eqnarray}
   {\partial \over \partial t}u=G(D^2u), \ u(0,\cdot)=\varphi (\cdot).
   \end{eqnarray}
   Then $({T}_t\varphi )(x)=u(t,x)$ defines a semilinear Markovian semigroup.
\end{exmp}

\begin{exmp}(Peng \cite{peng2010}) Let $\{B_t\}_{t\geq 0}$ be a $k$-dimensional $G$-Brownian motion on the sublinear expectation
space  $(\Omega,{\cal D},\E)$, $b:{\mathbb R}^d\to {\mathbb R}^{d}$, $\sigma:{\mathbb R}^d\to {\mathbb R}^{d\times k}$, $h:{\mathbb R}^d\to {\mathbb R}^{d\times k\times k}$ be global Lipschitz functions. Here $G: S(d)\to {\mathbb R}$ is a given sublinear function which is monotonic on $S(d)$.
Consider the stochastic differential equations on ${\mathbb R}^d$ driven by the $G$-Brownian motion $B$
\begin{eqnarray}
dX_t=b(X_t)dt+\sum _{i,j=1}^kh_{ij}(X_t)d\langle B^i,B^j\rangle_t+\sum _{i=1}^k\sigma_j(X_s)dB_t^j,
\end{eqnarray}
with initial condition $X_t=x$. Here $\langle \cdot,\cdot\rangle_{\cdot}$ is the mutual variation process. Define $F: S(d)\times \R^d \times \R^d\to S(d)$ with
\begin{eqnarray}
F_{ij}(A,p,x)={1\over 2}\langle A\sigma_i(x),\sigma_j(x)\rangle+\langle p,h_{ij}(x)+h_{ji}(x)\rangle.
\end{eqnarray}
Then ${T}_t\varphi(x)=\E\varphi(X_t)=:u(t,x)$ satisfies
\begin{eqnarray}
{\partial\over\partial t}u=G(F(D^2 u,Du,x))+bDu
\end{eqnarray}
and defines a sublinear Markovian semigroup for $\varphi\in C_{b, lip} (\R^d)$.
\end{exmp}

In this section, we will give the construction of canonical dynamical system on path space
under the assumption of the existence of invariant sublinear expectations of Markovian semigroups. 
Then we follow the standard philosophy in literature to define the ergodicity of the canonical dynamical system as the ergodicity of the stochastic dynamical systems (c.f. Da Prato-Zabczyk \cite{da-prato}).  
The invariant sublinear expectation has not been studied very much in literature. As far as we know, so far there is only one work (Hu-Li-Wang-Zheng \cite{hu})
on the existence of invariant sublinear expectation for G-diffusion processes if the system is sufficiently dissipative.

%
Firstly, we give the definition of an invariant expectation of sublinear Markovian semigroups as a natural extension of invariant measures.
\begin{defi}
An invariant sublinear expectation
$\tilde {T}: L_b({\B(\R^d}))\to \R$ is a sublinear expectation satisfying
$$(\tilde T T_s)(\varphi)=\tilde T(\varphi),\ {\rm for\  any}\ \varphi\in L_b(\B(\R^d)),$$
where $T_s, s\geq 0$ is a sublinear Markov semigroup.
\end{defi}

Define $\Omega^*=C(\R, \R^d)$, the space of all $\R^d$-valued continuous functions $(\omega^*_t)_{t\in \R}$ equipped with the distance
\begin{eqnarray}\label{eqn3.7a}
\rho (\omega^{*1},\omega^{*2}):=\sum_ {i=1}^\infty 2^{-i} [\max_{t\in [-i,i]}|\omega_t^{*1}-\omega_t^{*2}|\wedge 1]
\end{eqnarray}
with ${\cal F}^*=\B (C(\R,\R^d))$.  Moreover, set $ \hat \Omega =(\R^d)^{(-\infty,+\infty)}$ as the space of all $\R^d$-valued 
functions on $(-\infty,+\infty)$, $  \hat{\cal F}={\cal B}(\hat \Omega)$ is the smallest $\sigma$-field containing all cylindrical sets of $\hat \Omega$, $\hat {\cal D}$ is the linear space of all 
$  \hat{\cal F}$-measurable real-valued functions.

Given a sublinear Markov semigroup $T_t, t\geq 0$ and the invariant sublinear expectation $\tilde {T}[\cdot]$, we can define the family of finite-dimensional sublinear distributions of the canonical process
$( \omega_t)_{t\in \R}\in   \Omega$ under a sublinear expectation $\E^{\tilde {T}}[\cdot]$ on $((\R^d)^m,L_b(\B[(\R^d)^m]))$ as follows. For each integer $m\geq 1$, $\varphi\in L_b(\B[(\R^d)^m])$ and $t_1< t_2< \dots<t_m$, we successively define functions $\varphi_i\in L_b( \B[(\R^d)^{(m-i)}])$, $i=1,\dots, m,$ by
\begin{eqnarray*}
\varphi_1(x_1,\dots,x_{m-1})&:=&T_{t_m-t_{m-1}}[\varphi (x_1, \dots, x_{m-1},\cdot)](x_{m-1}),\\
\varphi_2(x_1,\dots,x_{m-2})&:=&T_{t_{m-1}-t_{m-2}}[\varphi_1 (x_1, \dots, x_{m-2},\cdot)](x_{m-2}),\\
&\vdots&\\
\varphi_{m-1}(x_1)&:=&T_{t_2-t_{1}}[\varphi_{m-2} (x_1, \cdot)](x_{1}).
\end{eqnarray*}

We now consider two different set-ups. The first one is to consider
 $\varphi_{m}:=\tilde T[\varphi_{m-1} (\cdot)]$ and
$$\E^{\tilde {T}}[\varphi( \hat \omega_{t_1}, \hat  \omega_{t_2},\dots,   \hat\omega_{t_m})]:=T^{\tilde T}_{t_1, t_2,\dots,t_m}[\varphi (\cdot)]:=\varphi_m.$$
In fact, $T_t^{\tilde T}=\tilde T$, for $t\geq 0$ and $T^{\tilde T}_{t_1, t_2,\dots,t_m}[\varphi (\cdot)]$ is a sublinear expectation defined on $L_b(\B[(\R^d)^m])$.
Denote
$$\tilde {\cal E}(\varphi(\hat \omega_0))=\tilde T [\varphi], {\rm \ for \ any \ } \varphi\in L_b(\B(\R^d)),$$
then
$$\tilde {\cal E}(\varphi(\hat \omega_t))=\tilde {\cal E}(\varphi(\hat \omega_0))=\tilde T [\varphi], {\rm \ for \ any \ } \varphi\in L_b(\B(\R^d)).$$

For an ordered set of distinct real numbers ${\mathbb I}=\{t_1,t_2,\dots,t_m\}$, let ${\mathbb I}^{\prime}=\{t_{\pi_1},t_{\pi_2},\dots,t_{\pi_m}\}$ be  a permutation of ${\mathbb I}$ so that $t_{\pi_1}<t_{\pi_2},\dots<t_{\pi_m}$. Define
\begin{eqnarray*}
{T}_{t_{1},t_{2},\dots,t_{m}}^{\tilde {T}}\varphi (x_1,x_2,\dots,x_m)={T}_{t_{\pi_1},t_{\pi_2},\dots,t_{\pi_m}}^{\tilde {T}}\varphi(x_{\pi_1},x_{\pi_2},\dots,x_{\pi_m}).
\end{eqnarray*}
The second set-up is to set $\varphi_m(x):=T_{t_1}[\varphi_{m-1} (\cdot)](x)$ for $t_1\geq 0$ following Peng \cite{peng2005}. Then 
$$\E^{x}[\varphi(\hat \omega_{t_1}, \hat \omega_{t_2},\dots, \hat \omega_{t_m})]:=T^{x}_{t_1, t_2,\dots,t_m}[\varphi (\cdot)]:=\varphi_m(x),$$
and $T^{x}_{t_1, t_2,\dots,t_m}[\cdot]$ defines a sublinear expectation. 

Set
$$L_0(\hat {\cal F}):= \{\varphi(\hat \omega_{t_1}, \hat \omega_{t_2},\dots, \hat \omega_{t_m}), {\rm\ for \ any}\ m\geq 1, \ t_1, t_2,\dots,t_m\in \R, \varphi\in L_b(\B[(\R^d)^m]) \}.$$
It is clear that $L_0(\hat {\cal F})$ is a linear subspace of $L_b(\hat {\cal F})$. Denote $L_0^p(\hat \Omega)$
that is the completion of $L_0(\hat {\cal F})$ under the norm $(\E^{\tilde {T}}[|\cdot|^p])^{1\over p}$, $p\geq 1$.
Define the space
\begin{eqnarray*}
&&Lip_{b,cyl}({\hat \Omega})\\
&:=& \{\varphi(\hat\omega_{t_1}, \hat\omega_{t_2},\dots, \hat\omega_{t_m}), {\rm\ for \ any}\ m\geq 1, \ t_1, t_2,\dots,t_m\in \R, \varphi\in C_{b,Lip}((\R^d)^m) \},
\end{eqnarray*}
and $L_G^p(\hat \Omega)$ the completion of $ Lip_{b,cyl}({\hat \Omega})$ under the norm $||\cdot||_{L_G^p}=(\E^{\tilde {T}}[|\cdot|^p])^{1 \over p}$. From Denis-Hu-Peng \cite{denis}, we know that the completion of $C_b(\hat \Omega)$ and $Lip_{b,cyl}({\hat \Omega})$ under the norm $||\cdot||_{L_G^p}$ are the same, and $L_G^p(\hat \Omega)\subset L_0^p(\hat \Omega)$. Here $C_b(\hat \Omega)$ is defined in a similar way as $Lip_{b,cyl}({\hat \Omega})$, but  replacing $C_{b,Lip}((\R^d)^m) $ by $C_{b}((\R^d)^m)$.

It was already known that there exists a unique sublinear  expectation $\E^x$ with finite 
dimensional expectation $\E^x=T^x_{t_1,t_2,\dots,t_m}$, $m\in \N$,  by applying the
nonlinear Kolmogorov extension theorem (\cite{peng2005}). 
For our purpose, by applying Kolmogorov's theorem again, there exists a unique
sub-linear expectation $\E^{\tilde {T}}$ on $L_0^1(\hat \Omega)$
such that
$$\E^{\tilde {T}} [Y]=T^{\tilde T}_{t_1, t_2,\dots,t_m}[\varphi (\cdot)],$$
for any $m\geq 1$, $t_1, t_2,\dots,t_m\in \R$, $Y\in L_0(\hat {\cal F})$ with $Y(\hat\omega)= \varphi(\hat \omega_{t_1}, \hat \omega_{t_2},\dots, \hat \omega_{t_m})$, $\varphi\in L_b(\B[(\R^d)^m])$.

Following the idea in \cite{peng2005}, we can also define the conditional expectation. Define $\hat \Omega_t:=\{\hat\omega \in \hat\Omega: \hat\omega_s\equiv\hat \omega_t, {\rm\ for\ any}\ s\geq t\}$ and $\hat{\cal F}_t:= {\cal B}(\hat\Omega_t)$. Let $X\in L_0(\hat{\cal F})$ be given as 
$$X=\varphi(\hat\omega_{t_1},\dots, \hat\omega_{t_n}, \hat\omega_{t_{n+1}}, \dots, \hat\omega_{t_{n+m}}), \ t_1<\dots<t_n<\dots<t_{n+m},$$
where $\varphi\in L_b(\B[(\R^d)^{n+m}])$. Without loss of generality, we may assume $t_n=t$. The conditional expectation under $\hat{\cal F}_t$ denoted by $\E^{\tilde {T}}[\cdot |\hat{\cal F}_t] : L_0(\hat{\cal F})\to L_0(\hat{\cal F}_t)$ is defined by
\begin{eqnarray}\label{neweqn3.8}
\E^{\tilde {T}}[X |\hat{\cal F}_t]:= \Phi (\hat\omega_{t_1}, \hat\omega_{t_2},\dots, \hat\omega_{t_n}),
\end{eqnarray}
where $\Phi(x_1, \dots, x_n):= T_{t_{n+1}-t_n,\dots, t_{n+m}-t_n}^{x_n} [\varphi (x_1,\dots,x_n,\cdot)]$. Similar to Proposition 5.1 in \cite{peng2005}, this can also be extended to $L_0^p(\hat\Omega)$.

Now we write the canonical process and associated $\sigma$-field as 
\begin{eqnarray}\label{cf12}
\hat X_t(\hat \omega)=\hat \omega_t,\ \hat \omega\in \hat \Omega,\ t\in \R.
\end{eqnarray}
The process $\hat X_t$, $t\in\R$, is Markovian in the sense that for $h>0$
\begin{eqnarray}\label{cf13}
&&\E^{\tilde {T}} [\varphi(\hat X(t+h))|\hat\F_t]
=T_h^{\hat X(t)}\varphi.
\end{eqnarray}

Now we introduce a group of invertible measurable transformations
$$\hat\theta_t\hat \omega(s)=\hat \omega(t+s),\ t,s \in \R.$$
Then it is easy to see that for any $\varphi\in L_0^1(\hat \Omega)$,
$$\E^{\tilde{T}}[\varphi(\hat X)]=\E^{\tilde{T}}[\varphi(\hat\theta_t\hat X)], $$
i.e.
$$\hat \theta_t\E^{\tilde{T}}=\E^{\tilde{T}}.$$
Thus $\hat\theta_t$ is an expectation preserving (or distribution preserving) transformation. Thus $S^{\tilde{T}}=(\hat \Omega, \hat {\cal D}, (\hat\theta_t)_{t\in {\mathbb R}}, \E^{\tilde{T}})$ defines a dynamical system, called {\it canonical dynamical system} associated with $T_t, t\geq 0$ and $\tilde{T}$, $\hat\theta_t$ preserving the expectation $\E^{\tilde{T}}$ for any function $\varphi\in L_0^1(\hat\Omega)$. The group $\hat\theta_t, t\in\R$ induces a group of linear transformation $U_t, t\in\R$, either on the real space $L_0^2 (\hat \Omega, \hat {\cal D},\E^{\tilde {T}})$  or $L_{0, \mathbb C}^2 (\hat \Omega, \hat {\cal D},\E^{\tilde {T}})$, by formula
$$U_t\xi(\hat \omega)=\xi(\hat\theta_t\hat \omega),\ \xi\in L_0^2 (\hat \Omega)\ ({\rm or}\ L_{0, \mathbb C}^2 (\hat \Omega)),\ \hat \omega\in \hat \Omega,\ t\in\R.$$
\begin{defi}
A dynamical system $S^{\tilde {T}}=(\hat \Omega, \hat {\cal D}, \hat\theta_t,\E^{\tilde {T}})$ is said to be continuous if for any $\xi\in L_0^2 (\hat \Omega)\ ({\rm or}\ L_{0, \mathbb C}^2 (\hat \Omega))$,
$$\lim_{t\to 0} U_t\xi=\xi, \ in \ L_0^2 (\hat \Omega)\ ({\rm or}\ L_{0, \mathbb C}^2 (\hat \Omega)).$$
\end{defi}
Denote
$$B (x, \delta)=\{y\in \R^d: |y-x|<\delta\}.$$
\begin{defi}
A stochastic process $\hat X(t)$, $t\in \R$ on $(\hat \Omega, \hat {\cal D},\E^{\tilde {T}})$ is said to be stochastically continuous if for any $\delta >0$,
$$\lim_{t\downarrow s}\E^{\tilde{T}}[{\rm I}_{\{|\hat X(t)-\hat X(s)|\geq \delta\}}]=0.$$
\end{defi}
\begin{defi}
A sublinear Markov semigroup $T_t, t\geq 0$ is said to be stochastically continuous if
$$T_t(x,B^c(x,\delta)):=\E^x[{\rm I}_{B^c(x,\delta)} (\hat X_t)]\downarrow 0, \ {\rm as}\ t\to 0, \ {\rm for\ any}\ x\in \R^d,\ \delta>0.$$
\end{defi}

\begin{thm}
If a Markov semigroup $T_t, t>0$ is stochastically continuous, then
$$\lim_{t\to 0}T_t f(x)=f(x),\ {\rm for\ all}\  f\in C_b (\R^d),\ x\in \R^d.$$
\end{thm}
\begin{proof}
For any $f\in C_b (\R^d)$, let $\epsilon>0$, $\delta>0$ be such that
$$|f(x)-f(y)|<\epsilon,\ {\rm provided}\ |x-y|<\delta.$$ So
\begin{eqnarray*}
&&|T_t f(x)-f(x)|\\
&=&|\E^x [f(\hat X(t))]-\E^x [f(\hat X(0))]|\\
&\leq & \E^x|f(\hat X(t))-f(\hat X(0))|\\
&=& \E^x|(f(\hat X(t))-f(\hat X(0))) {\rm I}_{\{|\hat X(t)-\hat X(0)|<\delta\}}|+ \E^x|(f(\hat X(t))-f(\hat X(0))) {\rm I}_{\{|\hat X(t)-\hat X(0)|\geq \delta\}}|\\
&\leq &\epsilon+2||f||_\infty \E^x[{\rm I}_{\{|\hat X(t)-\hat X(0)|\geq \delta\}}].
\end{eqnarray*}
Since $T_t$ is stochastically continuous, we have
$\lim_{t\to 0}T_t f(x)=f(x)$.
\end{proof}
\begin{prop}\label{prop3.6}
Let $T_t, t\geq 0$ be a sublinear Markov semigroup and $\tilde{T}$ is the invariant expectation. If the corresponding canonical process $\hat X(t), t\in \R$ on $(\hat \Omega, \hat \D,\E^{\tilde {T}})$ is stochastically continuous, then the dynamical system $S^{\tilde {T}}$ is continuous, i.e.
\begin{eqnarray}\label{3.1}
\lim_{s\to t}U_s\xi=U_t \xi,\ \xi\in L_G^2(\hat \Omega).
\end{eqnarray}
\end{prop}
\begin{proof} First we check (\ref{3.1}) for all $\xi\in Lip_{b, cyl}(\hat \Omega)$, i.e. for all $\xi$ of the form
$$\xi=f(\hat\omega_{t_1}, \hat\omega_{t_2},\dots, \hat\omega_{t_m}),$$
where $f\in C_{b,Lip}(\B[(\R^d)^m]),\ t_1<t_2<\dots<t_m.$
Let $\epsilon>0$, $\delta>0$ be such that
$$|f(x_1,\dots, x_m)-f(y_1,\dots, y_m)|<\epsilon,\ {\rm provided}\ |x_i-y_i|<\delta,\ i=1,\dots,m.$$
Then
\begin{eqnarray*}
&&\E^{\tilde {T}} |U_t\xi-U_s\xi|^2\\
&=&\E^{\tilde {T}} |f(\hat\omega(t_1+t),\dots, \hat\omega(t_m+t))-f(\hat\omega(t_1+s),\dots, \hat\omega(t_m+s))|^2\\
&=&\E^{\tilde {T}} |f(\hat X(t_1+t),\dots, \hat X(t_m+t))-f(\hat X(t_1+s),\dots, \hat X(t_m+s))|^2\\
&\leq &\E^{\tilde {T}}\Big[ |f(\hat X(t_1+t),\dots, \hat X(t_m+t))-f(\hat X(t_1+s),\dots, \hat X(t_m+s))|^2\\
&&\hskip 1cm  {\rm I}_{\{|\hat X({t_i+t})-\hat X({t_i+s})|<\delta,\ {\rm for\ any}\ i=1,\dots, m\}}\Big] \\
&&+\E^{\tilde {T}}\Big[ |f(\hat X(t_1+t),\dots, \hat X(t_m+t))-f(\hat X(t_1+s),\dots, \hat X(t_m+s))|^2\\
&&\hskip 1cm  {\rm I}_{\{|\hat X({t_i+t})-\hat X({t_i+s})|\geq\delta,\ {\rm for\ some}\ i=1,\dots, m\}}\Big]\\
&\leq& \epsilon +2||f||_\infty^2 \sum_{i=1}^m\E^{\tilde {T}}\Big[ {\rm I}_{\{|\hat X({t_i+t})-\hat X({t_i+s})|\geq\delta\}}\Big].
\end{eqnarray*}
Since $\hat X_t$ is stochastically continuous, 
(\ref{3.1}) follows for all $\xi\in Lip_{b, cyl}(\hat \Omega)$.

For any $\xi\in L_G^2(\hat \Omega)$, there exist $\xi_n\in Lip_{b,cyl}({\hat \Omega})$ such that for any $\epsilon>0$, there exists $N>0$, such that for any $n\geq N$, we have
$$\E^{\tilde {T}}|\xi_n-\xi|^2<{\epsilon \over 9}.$$
Now for the fixed $N$, there exists a $\delta>0$,
$$\E^{\tilde {T}} |U_t\xi_N-U_s\xi_N|^2<{\epsilon\over 9},\ {\rm when}\ |t-s|<\delta.$$
Therefore
\begin{eqnarray*}
\E^{\tilde {T}} |U_t\xi-U_s\xi|^2
&\leq& 3\left[\E^{\tilde {T}} |U_t\xi-U_t\xi_N|^2+\E^{\tilde {T}} |U_t\xi_N-U_s\xi_N|^2+\E^{\tilde {T}} |U_s\xi_N-U_s\xi|^2\right]\\
&\leq& 3\left [\E^{\tilde {T}} |\xi-\xi_N|^2+\E^{\tilde {T}} |U_t\xi_N-U_s\xi_N|^2+\E^{\tilde {T}} |\xi_N-\xi|^2\right ]\\
&<&\epsilon.
\end{eqnarray*}
The proposition is proved.
\end{proof}
\begin{rmk}
When we discuss the ergodicity of $G$-Brownian motion on $S^1$, we can show that $\hat X$ has a continuous modification which is also stochastically continuous in Proposition \ref{prop3.26}.
\end{rmk}
\begin{prop}\label{thm3.6}
Let $T_t, t\geq 0$ be a stochastically continuous Markov semigroup and $\tilde{\cal E}$ satisfy (\ref{new1}). Then the corresponding canonical process $\hat X(t), t\in \R$ on $(\hat \Omega, \hat \D,\E^{\tilde {T}})$ is stochastically continuous.
\end{prop}
\begin{proof}
Assume that $T_t, t\geq 0$ is stochastically continuous, then for any $t>s$ and $\delta>0$, we have
\begin{eqnarray*}
\E^{\tilde {T}}[{\rm I}_{\{|\hat X(t)-\hat X(s)|\geq \delta\}}]
  &=&\E^{\tilde {T}}\Big[\E^{\tilde {T}}[{\rm I}_{\{|\hat X(t)-\hat X(s)|\geq \delta\}}|{\cal F}_s]\Big]\\
  &=&\E^{\tilde {T}}[T_{t-s}(\hat X(s), B^c(\hat X(s),\delta))]\\
   &=&{\tilde{\cal E}}[T_{t-s}(\hat X(s), B^c(\hat X(s),\delta))],
 \end{eqnarray*}
by Markov property. Note here the conditional expectation can be defined in the Markovian case as we already explained in (\ref{neweqn3.8}) (Peng \cite{peng2005}). Since $T_t, t\geq 0$ is stochastically continuous and $\tilde{\cal E}$ satisfies (\ref{new1}), we have
$$\lim_{t\downarrow s}\E^{\tilde {T}}[{\rm I}_{\{|\hat X(t)-\hat X(s)|\geq \delta\}}]=0.$$
\end{proof}


Mirrored by the discrete case discussed in Section \ref{sec2}, we can give the following definitions.

\begin{defi}
A set $A\in {\hat \F}$ is said to be invariant with respect to $S^{\tilde {T}}=(\hat \Omega, \hat \D, \hat\theta_t,\E^{\tilde {T}})$
if for any $t\in \R$, $\hat\theta_t^{-1}A=A$.
\end{defi}


\begin{defi}
The invariant expectation $\tilde {T}$ is said to be ergodic with respect to the Markov semigroup ${T}_t, t\geq 0$, if its associated canonical
dynamical system $S^{\tilde {T}}=(\hat \Omega, \hat {\cal D}, \hat\theta_t, \E^{\tilde {T}})$ is ergodic i.e. any invariant set $A$ satisfies either $\E^{\tilde {T}} [{\rm I}_A]=0$ or $\E^{\tilde {T}} [{\rm I}_{A^c}]=0$.
 \end{defi}

As $U_t 1=1$ by definition of $U_t$. So it is obvious that $1$ is an eigenvalue of $U_t: L_b(\hat{\cal F})\to L_b(\hat{\cal F})$. Similar to the proof 
of Theorem \ref{hz16} we can prove:

\begin{thm}\label {thm3.19}
The continuous dynamical system $S^{\tilde {T}}$ is ergodic if and only if the eigenvalue $1$ of $U_t$ on $L_b(\hat{\cal F})$ is simple.
\end{thm}

\begin{defi}
A dynamical system $S^{\tilde {T}}=(\hat \Omega, \hat {\cal D}, (\hat\theta_t)_{t\in {\mathbb R}} , \E^{\tilde {T}})$ is said to satisfy the strong law of large numbers (SLLN) if for any bounded measurable function $\xi$, there exists a constant $c$ such that
\begin{eqnarray}\label{hz17}
\lim_{T\to \infty} {1\over T} \int_0^T U_t\xi dt=c, \ {\rm quasi-surely}.
\end{eqnarray}
\end{defi}

\begin{thm}\label{thm4.3}
If $S^{{\tilde T}}$ satisfies SLLN, then the eigenvalue $1$ of $U_t$ on $ L_b(\hat{\cal F})$ is simple and $S^{{ \tilde T}}$
is ergodic.
\end{thm}
\begin{proof} The proof is similar to that of Theorem \ref{hz10}.
\end{proof}

Now let us prove the converse part of Theorem \ref{thm4.3} under the regularity assumption.
\begin{thm}\label{thm4.7}
Assume the eigenvalue $1$ of $U_t$ on $L_b(\hat{\cal F})$ is simple and $\E^{\tilde {T}}$ is regular. Then the dynamical system $S^{\tilde {T}}$ satisfies SLLN and the constant in (\ref {hz17}) satisfies \\
$c\in[-\E^{\tilde {T}} (-\int_0^1 U_t\xi dt), \E^{\tilde {T}} (\int_0^1 U_t\xi dt)]$.
\end{thm}
\begin{proof}
Assume $1$ is a simple eigenvalue of $U_t$ on $L_b(\hat{\cal F})$. For an arbitrary $h>0$, $\xi\in  L_b(\hat{\cal F})$, $\xi\geq 0$, define
$$\xi_h=\int_0^h U_s\xi ds,$$
and consider $\hat \theta_h$, a fixed expectation preserving transformation on $\hat \Omega$. Then
$${1\over n} \sum_{k=0}^{n-1} \xi_h(\hat\theta_h^{k}(\hat \omega))={1\over n}\int_0^{nh}U_s\xi(\hat \omega)ds,$$
and $\E^{\tilde {T}}$ is regular, by Theorem \ref{thm4.6},
$$-\E^{\tilde {T}}[-\xi_h]\leq \lim_{n\to \infty} {1\over n}\int_0^{nh}U_s\xi ds=:\bar \xi_h^*\leq \E^{\tilde {T}}[\xi_h], \ \rm quasi-surely.$$
For arbitrary $T\geq 0$, let $n_T=[{T\over h}]$ be the maximal nonnegtive integer less than or equal to ${T\over h}$. Then  $n_T h\leq T\leq (n_T+1)h$ and quasi-surely
$${{n_T}\over {(n_T+1)}h}{1\over {n_T}}\int_0^{n_Th}U_s\xi ds\leq {1\over T}\int_0^{T}U_s\xi ds\leq{{n_T+1}\over {n_T}h}{1\over {n_T+1}}\int_0^{(n_T+1)h}U_s\xi ds.$$
Thus,
$$\lim_{T\to \infty}{1\over T}\int_0^{T}U_s\xi ds= {1\over h}\bar \xi_h^*, \ \rm quasi-surely.$$
In particular, it follows that $\bar \xi_h^*=h\bar\xi_1^*$. But it is easy to see that
$$U_h\bar\xi_h^*=\bar\xi_h^*.$$
Thus
$$U_h\bar\xi_1^*=\bar\xi_1^*,\ {\rm for \ all}\ h\geq 0.$$
However, from the assumption, $\bar\xi_1^*$ should be a constant quasi-surely. So
$$-\E^{\tilde {T}}[-\int_0^1 U_t\xi dt]=-\E^{\tilde {T}}[-\bar\xi_1^*]\leq \bar\xi_1^*= \E^{\tilde {T}}[\bar\xi_1^*]\leq \E^{\tilde {T}}[\xi_1]=\E^{\tilde {T}}[\int_0^1 U_t\xi dt].$$
This proves that the dynamical system $S^{\tilde {T}}$ satisfies the SLLN.
\end{proof}

%
\begin{prop}\label{lem5.1}
If $\varphi \in L_b({\cal B}(\R^d))$ satisfies 
$T_t\varphi=\varphi$, $T_t(-\varphi)=-\varphi$ and $|\varphi (\hat \omega(0))|^2$ has no mean-uncertainty, 
then $\xi\in L_0^2$ given by
$$\xi(\hat \omega)=\varphi(\hat \omega(0)),\ \hat \omega\in \hat \Omega,$$
satisfies $U_t\xi=\xi$, quasi-surely.
\end{prop}
\begin{proof}
Note
$$U_t\xi(\hat \omega)=\xi(\hat \theta_t\hat \omega)=\varphi(\hat\theta_t\hat \omega(0))=\varphi(\hat \omega(t)).$$
So the condition that $U_t\xi= \xi,$ quasi-surely, is equivalent to
$$\varphi(\hat \omega(t))= \varphi(\hat \omega(0)),\ {\rm quasi-surely}$$
and therefore
\begin{eqnarray}\label{5.1}
\varphi(\hat X(t))=\varphi(\hat X(0)),\ {\rm quasi-surely},
\end{eqnarray}
where $\hat X(t),\ t\in \R$ is the canonical process. To prove (\ref {5.1}), note that
\begin{eqnarray*}
&&\E^{\tilde {T}} |\varphi(\hat X(t))-\varphi(\hat X(0))|^2\\
&\leq& 2\E^{\tilde {T}} \big[-\varphi (\hat X(t))\varphi (\hat X(0))\big]+\E^{\tilde {T}} |\varphi(\hat X(t))|^2+\E^{\tilde {T}} |\varphi(\hat X(0))|^2.
\end{eqnarray*}
By Markovian property and the assumption
that $T_t\varphi=\varphi$, $T_t(-\varphi)=-\varphi$ and $|\varphi (\hat \omega(0))|^2$ has no mean-uncertainty, we have
\begin{eqnarray*}
 &&\E^{\tilde {T}} \Big[-\varphi (\hat X(t))\varphi (\hat X(0))\Big]\\
 &=&\E^{\tilde {T}} \bigg[\E^{\tilde {T}}\Big[-\varphi (\hat X(t))\varphi (\hat X(0))|\hat {\cal F}_0\Big]\bigg]\\
 &\leq&\E^{\tilde {T}} \bigg[\big(-\varphi (\hat X(0))\big)^+\E^{\tilde {T}}\Big[{\varphi (\hat X(t))}|\hat {\cal F}_0\Big]+\big(-\varphi (\hat X(0))\big)^-\E^{\tilde {T}}\Big[{-\varphi (\hat X(t))}|\hat {\cal F}_0\Big]\bigg]\\
 &=&\E^{\tilde {T}} \Big[\big(-\varphi (\hat X(0))\big)^+ (T_t\varphi )(\hat X(0))+\big(-\varphi (\hat X(0))\big)^- 
 (T_t\big(-\varphi ))(\hat X(0)\big)\Big]\\
 &=&\E^{\tilde {T}} \Big[\big(-\varphi (\hat X(0))\big)^+ \varphi (\hat X(0))+\big(-\varphi (\hat X(0))\big)^- \big(-\varphi (\hat X(0))\big)\Big]\\
 &=&\E^{\tilde {T}} \big[-|\varphi (\hat X(0))|^2\big]\\
 &=&-\E^{\tilde {T}} |\varphi (\hat X(0))|^2.
\end{eqnarray*}
Note also
$$\E^{\tilde {T}}|\varphi(\hat X(t))|^2=\E^{\tilde {T}} |\varphi(\hat X(0))|^2.$$
So
\begin{eqnarray*}
\E^{\tilde {T}} |\varphi(\hat X(t))- \varphi(\hat X(0))|^2
\leq -2\E^{\tilde {T}} |\varphi (\hat X(0))|^2+2\E^{\tilde {T}} |\varphi (\hat X(0))|^2=0.
\end{eqnarray*}
Thus
$$\E^{\tilde {T}} |\varphi(\hat X(t))- \varphi(\hat X(0))|^2=0.$$
It follows that
$$|\varphi(\hat X(t))- \varphi(\hat X(0))|=0, \ {\rm quasi-surely}.$$
The result is proved.
\end{proof}
\begin{lem}\label{lem5.2}
Assume that $\xi\in L_0^2$ satisfies $U_t\xi= \xi$, quasi-surely. Then for an arbitrary random variable $\tilde\xi\in L_0^2$ which is $\hat {\cal F}_{[-t,t]}$-measurable, $t\geq 0$, we have
$$\E^{\tilde {T}}\Big|\E^{\tilde {T}}\big[ U_t\tilde\xi|\hat {\cal F}_{[0,0]}\big]-\xi\Big|^2\leq 10\E^{\tilde {T}}|\xi-\tilde \xi|^2. $$
\end{lem}
\begin{proof}
First we have for the sublinear expectation, for $t\geq 0$,
\begin{eqnarray*}
&&\E^{\tilde {T}}\Big|\E^{\tilde {T}}\big[ U_t\tilde\xi|\hat {\cal F}_{[0,0]}\big]-\xi\Big|^2\\
&\leq & 2\E^{\tilde {T}}\Big|\E^{\tilde {T}}\big[U_t\tilde\xi|\hat {\cal F}_{[0,0]}\big]-U_{-t}\tilde\xi\Big|^2+2\E^{\tilde {T}}| U_{-t}\tilde\xi-\xi|^2\\
&= & 2\E^{\tilde {T}}\Big|\E^{\tilde {T}}\big[ U_t\tilde\xi|\hat {\cal F}_{0}\big]-\E^{\tilde {T}}\big[ U_{-t}\tilde\xi|\hat {\cal F}_0\big]\Big|^2+2\E^{\tilde {T}}| U_{-t}\tilde\xi-U_{-t}\xi|^2\\
&=&2\E^{\tilde {T}}\Big|\E^{\tilde {T}}\big[ U_t\tilde\xi|\hat {\cal F}_{0}\big]-\E^{\tilde {T}}\big[ U_{-t}\tilde\xi|\hat {\cal F}_0\big]\Big|^2+2\E^{\tilde {T}}|\tilde\xi-\xi|^2,
\end{eqnarray*}
where we have used that $\hat X$ is a Markov process, that $U_{t}\tilde\xi$  and $U_{-t}\tilde\xi$ are respectively $\hat {\cal F}_{[0,2t]}$- and $\hat {\cal F}_{0}$- measurable and that $U_t$ is $\E^{\tilde {T}}$-preserving transformation.

By Jensen's inequality and sublinearity of $\E^{\tilde {T}}$, we have
\begin{eqnarray*}
\Big|\E^{\tilde {T}}\big[ U_t\tilde\xi|\hat {\cal F}_{0}\big]-\E^{\tilde {T}}\big[ U_{-t}\tilde\xi|\hat {\cal F}_0\big]\Big|^2
&\leq &\left|\E^{\tilde {T}}\Big[|U_t\tilde\xi- U_{-t}\tilde\xi|\big|\hat {\cal F}_0\Big]\right|^2\\
&\leq &\E^{\tilde {T}}\Big[|U_t\tilde\xi- U_{-t}\tilde\xi|^2\big|\hat {\cal F}_0\Big].
\end{eqnarray*}
Moreover, it follows from $\E^{\tilde {T}}$-preserving property of $U_t$ that
\begin{eqnarray*}
\E^{\tilde {T}}\bigg[\E^{\tilde {T}}\Big[|U_t\tilde\xi- U_{-t}\tilde\xi|^2\big|\hat {\cal F}_0\Big]\bigg]
&=&\E^{\tilde {T}}\Big[\big| U_t\tilde\xi- U_{-t}\tilde\xi\big|^2\Big]\\
&=&\E^{\tilde {T}}\Big[\big| U_{2t}\tilde\xi- \tilde\xi\big|^2\Big]\\
&\leq & 2\E^{\tilde {T}}\Big[\big| U_{2t}\tilde\xi- U_{2t}\xi\big|^2\Big]+2\E^{\tilde {T}}\Big[\big|U_{2t}\xi- \tilde\xi\big|^2\Big]\\
&=&2\E^{\tilde {T}}\Big[\big| \tilde\xi-\xi\big|^2\Big]+2\E^{\tilde {T}}\Big[\big|\xi-\tilde\xi\big|^2\Big]\\
&\leq &4\E^{\tilde {T}}|\tilde\xi-\xi|^2.
\end{eqnarray*}
The result follows.
\end{proof}
Now we are ready to prove the converse part of Proposition \ref{lem5.1}.
\begin{prop}\label{lem5.3}
If $\xi\in L_0^2(\hat\Omega)$ and $U_t\xi= \xi$, then there exists $\varphi \in L_b ({\cal B}(\R^d))$ such that $T_t\varphi=\varphi$, 
$T_t(-\varphi)=-\varphi$ and $\xi(\hat \omega)=\varphi (\hat \omega(0))$ quasi-surely.
\end{prop}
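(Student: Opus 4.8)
The plan is to show that an invariant $\xi$ must depend only on the single time-$0$ coordinate $\hat X(0)$, and then to transfer the invariance $U_t\xi=\xi$ into the semigroup identities $T_t\varphi=\varphi$ and $T_t(-\varphi)=-\varphi$ through the Markov property (\ref{cf13}). The engine for the first part is Lemma \ref{lem5.2}: conditioning a good cylindrical approximant of $\xi$ onto $\hat{\cal F}_{[0,0]}=\sigma(\hat X(0))$ loses almost nothing in $L_0^2$, which forces $\xi$ into the closure of functions of $\hat X(0)$.

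First I would approximate. Since $\xi\in L_0^2(\hat\Omega)$ is by definition a limit of cylindrical functions, there exist $\tilde\xi_n\in L_0(\hat{\cal F})$, each $\hat{\cal F}_{[-t_n,t_n]}$-measurable for some $t_n\geq 0$, with $\E^{\tilde{\cal E}}|\xi-\tilde\xi_n|^2\to 0$. Setting $\eta_n:=\E^{\tilde{\cal E}}[U_{t_n}\tilde\xi_n\,|\,\hat{\cal F}_{[0,0]}]$, Lemma \ref{lem5.2} gives $\E^{\tilde{\cal E}}|\eta_n-\xi|^2\leq 10\,\E^{\tilde{\cal E}}|\xi-\tilde\xi_n|^2\to 0$, so $\eta_n\to\xi$ in $L_0^2$. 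Each $\eta_n$ is $\sigma(\hat X(0))$-measurable, hence of the form $\eta_n=\varphi_n(\hat X(0))$ with $\varphi_n\in L_b({\cal B}(\R^d))$. Because $\E^{\tilde{\cal E}}|\psi(\hat X(0))|^2=\tilde T[|\psi|^2]$, the map $J:\psi\mapsto\psi(\hat X(0))$ is an isometry from $L^2(\tilde T)$, the completion of $L_b({\cal B}(\R^d))$ under $\psi\mapsto(\tilde T[|\psi|^2])^{1/2}$, onto the closed subspace $\mathcal K\subset L_0^2$ of limits of functions of $\hat X(0)$. As $\{\eta_n\}\subset\mathcal K$ converges to $\xi$ and $\mathcal K$ is closed, $\xi\in\mathcal K$, so there is $\varphi\in L^2(\tilde T)$ with $\xi=\varphi(\hat X(0))$ quasi-surely; when $\xi$ is bounded by $M$, the same bound transfers to $\varphi$ $\tilde T$-quasi-surely, so after modification on a polar set we may take $\varphi\in L_b({\cal B}(\R^d))$.

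Next I would extract the invariance of $\varphi$. From $\xi=\varphi(\hat X(0))$ and $U_t\xi(\hat\omega)=\xi(\hat\theta_t\hat\omega)=\varphi(\hat\omega(t))=\varphi(\hat X(t))$, the hypothesis $U_t\xi=\xi$ reads $\varphi(\hat X(t))=\varphi(\hat X(0))$ quasi-surely for every $t\geq 0$. Conditioning on $\sigma(\hat X(0))$ and using the Markov property (\ref{cf13}),
$$T_t\varphi(\hat X(0))=\E^{\tilde{\cal E}}[\varphi(\hat X(t))\,|\,\sigma(\hat X(0))]=\E^{\tilde{\cal E}}[\varphi(\hat X(0))\,|\,\sigma(\hat X(0))]=\varphi(\hat X(0))$$
quasi-surely, and the identical computation applied to $-\varphi$ gives $T_t(-\varphi)(\hat X(0))=-\varphi(\hat X(0))$. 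Since $\hat X(0)$ has distribution $\tilde T$ under $\E^{\tilde{\cal E}}$, these identities say precisely that $T_t\varphi=\varphi$ and $T_t(-\varphi)=-\varphi$ hold $\tilde T$-quasi-surely, which is exactly the converse of Proposition \ref{lem5.1}.

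The hard part will be the identification step in the second paragraph rather than the invariance transfer. Two points need care: that the sublinear conditional expectation $\E^{\tilde{\cal E}}[\,\cdot\,|\,\sigma(\hat X(0))]$ of a bounded cylindrical function is genuinely a Borel function of $\hat X(0)$ (so that writing $\eta_n=\varphi_n(\hat X(0))$ is legitimate), and that $J$ is an isometry with closed range, so that the $L_0^2$-limit of the $\varphi_n(\hat X(0))$ is again of the form $\varphi(\hat X(0))$. The boundedness of $\varphi$, and the fact that $T_t\varphi=\varphi$ can in general only be asserted $\tilde T$-quasi-surely rather than pointwise unless the invariant distribution has full support, are the remaining subtleties to record.
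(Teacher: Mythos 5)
Your proposal follows the paper's own proof almost line for line: the same cylindrical approximation $\tilde\xi_n$, the same application of Lemma \ref{lem5.2} to conclude $\E^{\tilde{\cal E}}\big[U_{t_n}\tilde\xi_n\,\big|\,\hat{\cal F}_{[0,0]}\big]\to\xi$ in $L_0^2$, the same factorization of the conditional expectation through $\hat X(0)$, and the identical Markov-property computation at the end transferring $U_t\xi=\xi$ into $T_t\varphi=\varphi$ and $T_t(-\varphi)=-\varphi$. The one place you diverge is the identification step, and there your argument as written has a hole. The paper invokes the Borel--Cantelli lemma of \cite{denis} to extract a subsequence along which $\varphi_n(\hat X(0))$ converges quasi-surely, and then defines $\varphi(x)$ pointwise as $\lim_{n\to\infty}\varphi_n(x)$ where the limit exists and $0$ otherwise; this directly manufactures a genuine Borel function with $\xi=\varphi(\hat X(0))$ quasi-surely. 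You instead argue abstractly that $J:\psi\mapsto\psi(\hat X(0))$ is an isometry of $L^2(\tilde T)$ onto a closed subspace of $L_0^2$, so that $\xi$ lies in its range. The isometry identity $\E^{\tilde{\cal E}}|\psi(\hat X(0))|^2=\tilde T[|\psi|^2]$ is correct, but an element of the abstract completion $L^2(\tilde T)$ is not yet a Borel function on $\R^d$: your phrase ``after modification on a polar set we may take $\varphi\in L_b({\cal B}(\R^d))$'' is precisely the point that requires proof, and in the sublinear setting one cannot fall back on a Riesz-type representation (the paper stresses its absence). The quasi-sure subsequence extraction is the standard device that supplies the concrete representative, so to close your argument you should replace, or supplement, the closed-range step by exactly that Borel--Cantelli extraction. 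Two caveats you record are shared with (and glossed over by) the paper itself: boundedness of $\varphi$ is not automatic when $\xi\in L_0^2$ is unbounded, even though the statement asserts $\varphi\in L_b({\cal B}(\R^d))$; and both proofs deliver $T_t\varphi=\varphi$, $T_t(-\varphi)=-\varphi$ only $\tilde T$-quasi-surely rather than pointwise.
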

\begin{proof}
For $\xi\in L_0^2(\hat\Omega)$, by definition of $L_0^2(\hat\Omega)$, there exits a sequence $\{\tilde \xi_n\}$ of ${\cal F}_{[-nt,nt]}$-measurable elements of $L_b(\hat{\cal F})$ such that
$$\E^{\tilde {T}}|\tilde \xi_n-\xi|^2\to 0,\ {\rm as}\ n\to\infty.$$
Thus by Lemma \ref{lem5.2},
$$\lim_{n\to \infty}\E^{\tilde {T}}[ U_{nt}\tilde\xi_n|{\cal F}_{[0,0]}]=\xi \ {\rm in }\ L_0^2.$$
Moreover, there exists $\varphi_n\in L^2_{\mathbb C} (\R^d, \tilde T)$ such that
$$\E^{\tilde {T}}[U_{nt}\tilde\xi_n|{\cal F}_{[0,0]}]=\varphi_n (\hat X(0)), \ {\rm quasi-surely}.$$
Thus
$$\lim_{n\to \infty}\varphi_n (\hat X(0))=\xi,  \ {\rm in }\ L_0^2(\hat\Omega).$$
By Borel-Cantelli lemma (Denis-Hu-Peng \cite{denis}), we can choose a quasi-surely convergent subsequence, still denoted by $\varphi_n (\hat X(0))$. 
Now we define
\begin{eqnarray*}
\varphi(x)=
\left\{\begin{array}{ll}
\lim_{n\to \infty} \varphi_n(x)  \ &{\rm if\ the\ limit\ exists},\\
0\  &{\rm otherwise}.
\end{array}
\right.
\end{eqnarray*}
Then $\xi=\varphi (\hat X(0))$. It follows from $U_t\xi=\xi$ that 
\begin{eqnarray*}
\varphi (\hat X(t))=U_t\varphi (\hat X(0))=\varphi (\hat X(0)).
\end{eqnarray*}
By using conditional expectations, we have
\begin{eqnarray*}
(T_t\varphi )(\hat X(0))=\E^{\tilde {T}}[\varphi (\hat X(t))|{\cal F}_0]=\E^{\tilde {T}}[\varphi (\hat X(0))|{\cal F}_0]=\varphi (\hat X(0)),
\end{eqnarray*}
and
\begin{eqnarray*}
(T_t(-\varphi ))(\hat X(0))=\E^{\tilde {T}}[-\varphi (\hat X(t))|{\cal F}_0]=\E^{\tilde {T}}[-\varphi (\hat X(0))|{\cal F}_0]=-\varphi (\hat X(0)).
\end{eqnarray*}
The proof is complete.
\end{proof}

\begin{thm}\label{hz20}
Assume the Markov chain $T_t$ has an invariant  expectation $\tilde T$. Let $\hat X$ be the canonical processes on the canonical dynamical system $(\hat \Omega, \hat \D, \hat \theta_t,\E^{\tilde {T}})$ and stochastically continuous. 
Then  the following two statements:

(i)  if $T_t\varphi=\varphi$, $T_t(-\varphi)=-\varphi$, $\varphi\in L_b({\cal B}(\R^d))$ for any $t\geq 0$, then $\varphi$ is constant, $\tilde T$-quasi-surely;

(ii) $\tilde T$ is ergodic,\\
have the relation that (i) implies (ii). Moreover, if we assume further that for any $\varphi\in L_b({\cal B}(\R^d))$,   $|\varphi(\hat X(0))|^2$ has no mean-uncertainty, then (i) and (ii) are equivalent.
\end{thm}
\begin{proof}
The theorem can be proved easily by Theorem \ref{thm3.19}, Proposition \ref{lem5.1} and Proposition \ref{lem5.3}.
\end{proof}

\section{Ergodicity of G-Brownian motion on the unit circle}

As an example, we consider a $G$-Brownian motion on the unit circle $S^1=[0,2\pi]$ defined by $X(t)=x+B_t\ {\rm mod}\  2\pi$, where $B$ is a one-dimensional $G$-Brownian motion such that $B_1$ has normal distribution $N(0,[\underline \sigma^2,\overline \sigma ^2])$. Here
$\overline \sigma ^2   \geq \underline \sigma^2$ are constants. See Example \ref{example3.3} for the definition of the $G$-Brownian motions.  For $\varphi\in C_{b,lip}(S^1)$, set 
\begin{eqnarray}\label{eqn3.7}
T_t\varphi(x)=u(t,x)=\E \varphi(X(t)).
\end{eqnarray}
 Then $u$ is a viscosity solution of the following fully nonlinear PDE (Peng \cite{peng2006, peng2010})
\begin{eqnarray}\label{eqn3.8}
{\partial \over \partial t}u={1\over 2} \overline \sigma ^2u_{xx}^+-{1\over 2} \underline \sigma ^2u_{xx}^-, \ u|_{t=0}=\varphi,\ x\in S^1.
\end{eqnarray}
If we assume $\underline\sigma^2>0$, according to Krylov \cite{krylov1, krylov}, or Peng \cite{peng2010}, when $t>0$, $u(t,x)$ is $C^{1,2}$ in $(t,x)$, thus a classical solution for any $t>0$. In fact, we can extend the solution to the case when $\varphi$ is bounded and measurable and obtain a classical solution for any $t>0$. Before we give this result, we need the following lemma about the regularity of $T_t$.
\begin{lem}\label{lem3.6}
Assume $\underline\sigma^2>0$, for $T_t$ defined in (\ref{eqn3.7}), we have for any $t>0$, $A_n\in {\cal B}({S^1})$ such that $A_n\downarrow \emptyset$, we have $(T_t I_{A_n})(x) \downarrow 0$.
\end{lem}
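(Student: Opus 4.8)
The plan is to reduce the claim to a single quantitative statement — a uniform-in-measure upper bound on the one–time marginal ``density'' of the process $X(t)=x+B(t)\bmod 2\pi$ — and then to exploit that the Lebesgue measure of $A_n$ tends to $0$. Using the representation $\E[\cdot]=\sup_{P\in\mathcal P}E_P[\cdot]$, I would first write
\[
(T_tI_{A_n})(x)=\E\big[I_{A_n}(x+B(t))\big]=\sup_{P\in\mathcal P}P\big(x+B(t)\in A_n\ (\mathrm{mod}\ 2\pi)\big),
\]
so that $(T_tI_{A_n})(x)$ is the capacity $\mathbb V$ of the event $\{x+B(t)\in A_n\}$ and is automatically well defined for Borel $A_n$. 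Since $A_n\downarrow\emptyset$, monotonicity of $\E$ already forces $(T_tI_{A_n})(x)$ to be decreasing in $n$ and bounded below by $0$, so the only thing left to prove is that the limit is $0$.

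The heart of the matter, and the step where $\underline\sigma^2>0$ enters decisively, is the uniform estimate
\[
P\big(x+B(t)\in A\big)\le C(t)\,|A|\qquad\text{for every }P\in\mathcal P,\ A\in\B(S^1),
\]
where $|A|$ is Lebesgue measure and $C(t)$ depends only on $t,\underline\sigma^2,\overline\sigma^2$. I would obtain this from the non-degeneracy of equation (\ref{eqn3.8}): under each $P$ the canonical process has the form $X(t)=x+\int_0^t\sqrt{a_s}\,dW_s\bmod 2\pi$ with an adapted control $a_s\in[\underline\sigma^2,\overline\sigma^2]$, and a strictly positive lower bound on the volatility prevents the marginal law of $X(t)$ from concentrating, yielding a density bounded by $C(t)$ uniformly in the control. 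Equivalently, at the PDE level one linearises (\ref{eqn3.8}) using the $C^{1,2}$ regularity of $u$ for $t>0$ quoted above: setting $a^\ast(s,y)=\overline\sigma^2$ where $u_{yy}>0$ and $a^\ast(s,y)=\underline\sigma^2$ elsewhere, the function $u=T_tI_A$ solves the \emph{linear} uniformly parabolic equation $\partial_s u=\tfrac12 a^\ast u_{yy}$ with measurable coefficient confined to $[\underline\sigma^2,\overline\sigma^2]$, and the classical Gaussian (Nash–Aronson type) upper bound for such one–dimensional operators — whose constant depends only on the ellipticity — gives exactly $u(t,x)\le C(t)|A|$.

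I expect this density bound to be the main obstacle: it is the only place where genuine parabolic theory is needed, and it cannot be reached by a naive comparison, since the sublinear operator $F(p)=\tfrac12\overline\sigma^2p^+-\tfrac12\underline\sigma^2p^-=\max_{a\in[\underline\sigma^2,\overline\sigma^2]}\tfrac12 ap$ dominates \emph{both} linear operators with constant diffusivity, so the two obvious linear solutions furnish only lower bounds for $u$. Once the estimate is in hand the proof closes immediately: because $A_n\downarrow\emptyset$ and Lebesgue measure on $S^1$ is finite, continuity from above gives $|A_n|\downarrow 0$, whence $0\le(T_tI_{A_n})(x)\le C(t)|A_n|\to0$, uniformly in $x$, which is the assertion.
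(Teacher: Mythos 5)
Your overall reduction coincides exactly with the paper's: both proofs start from the representation $(T_tI_{A_n})(x)=\sup_{P\in\mathcal P}P\bigl(x+B(t)\in A_n\ (\mathrm{mod}\ 2\pi)\bigr)$, note monotonicity in $n$, aim at the uniform estimate $\sup_{P\in\mathcal P}P\bigl(x+B(t)\in A\bigr)\leq C(t)\,\mathrm{Leb}(A)$ with $C(t)$ depending only on $t$ and $\underline\sigma^2$, and conclude from $\mathrm{Leb}(A_n)\downarrow 0$. The problem is that this estimate is the entire mathematical content of Lemma \ref{lem3.6}, and you do not prove it: your probabilistic justification (``a strictly positive lower bound on the volatility prevents the marginal law of $X(t)$ from concentrating'') is a restatement of the claim, not an argument. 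The paper's proof of precisely this point is concrete: by the Dambis--Dubins--Schwarz time change, $\int_0^t\theta_s\,dW_s=\tilde W_{\tilde\theta^2_t}$ with $\tilde\theta^2_t=\int_0^t\theta_s^2\,ds\in[\underline\sigma^2t,\overline\sigma^2t]$; conditioning at the random time $\tilde\theta^2_t-\underline\sigma^2t$ (via Proposition 6.17, Chapter 2 of Karatzas--Shreve) isolates a terminal Gaussian increment of variance exactly $\underline\sigma^2t$, so that the law of $X(t)$ is a mixture of wrapped Gaussians on $S^1$ with that fixed variance, whose heat-kernel density is bounded in (\ref{zhao44}) by an explicit constant uniform in $x$, in the shift $z$, and in the control $\theta$. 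That Gaussian smoothing of fixed variance, not ellipticity alone, is what makes the bound true.

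The alternative PDE route you propose in place of this argument would fail. Freezing the coefficient as $a^*(s,y)=\overline\sigma^2$ on $\{u_{yy}>0\}$ and $\underline\sigma^2$ elsewhere produces a \emph{non-divergence-form} equation $\partial_s u=\tfrac12 a^*u_{yy}$ with merely measurable coefficients, whereas the Nash--Aronson Gaussian upper bound is a divergence-form theorem; for non-divergence form with measurable, uniformly elliptic $a(t,x)$ the conclusion you want is genuinely false --- Fabes and Kenig (Duke Math.\ J.\ 48, 1981) constructed such a coefficient in one space dimension whose transition probabilities are \emph{singular} with respect to Lebesgue measure. So ``constant depends only on the ellipticity'' cannot rescue the linearized equation, and no naive elliptic-regularity argument can stand in for the time-change step. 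There is also an internal circularity: your linearization presupposes that $u=T_tI_A$ is $C^{1,2}$ for indicator (merely measurable) initial data, but in the paper that regularity is the content of Lemma \ref{cf1}, whose proof \emph{uses} Lemma \ref{lem3.6}; the Krylov regularity quoted before the lemma applies to the Lipschitz data in (\ref{eqn3.7})--(\ref{eqn3.8}), not to $I_A$. Your side remark that comparison with the two constant-coefficient heat semigroups only yields lower bounds for $u$ is correct, but the conclusion to draw from it is that the missing upper bound must come from the probabilistic decomposition the paper carries out, which your proposal leaves as an acknowledged gap.
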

\begin{proof} From Denis-Hu-Peng \cite {denis}, we know that for any function $\varphi\in L_b({\cal B}(S^1))$, 
\begin{eqnarray}\label{zhao45}
T_t\varphi(x)=\E \varphi(X(t))=\sup_{\theta^2_\cdot \in \{ {\rm adapted \ processes \ with \ values\  in }\ [\underline\sigma^2, 
\overline\sigma^2]\}} E[\varphi(x+\int_0^t \theta_s dW_s\ {\rm mod}\ 2\pi)],
\end{eqnarray}
where $W_.$ is the classical Brownian motion on $\R^1$, $W_0=0$,  and $E$ is the linear expectation with respect to $W_.$. Denote ${\cal F}_t=\sigma \{ W_s: 0\leq s\leq t\}$.
Note that by Theorem 3.4.6 in Karatzas-Shreve \cite{karatzas},  $\int_0^t \theta_s dW_s$ is in law a Brownian motion with time $\tilde \theta _t^2=\int _0^t\theta _s^2ds$ i.e. 
there exists a standard Brownian motion $\tilde W$ such that $\int_0^t \theta_s dW_s=\tilde W_{\tilde \theta _t^2}$,
where $\tilde\theta_t^2$ is a stopping time with respect to 
the filtration ${\cal G}_s={\cal F}_{\tau(s)}$, where $\tau(s)=\inf\{t\geq 0:\tilde \theta_t^2>s\}$. Note that 
$\underline \sigma ^2\leq  \theta_.^2\leq  \bar\sigma ^2$ and $\underline \sigma ^2>0$, so $\tilde \theta _t^2$ is strictly increasing in $t$, we have $\tau(s)=\inf\{t\geq 0:\tilde \theta_t^2\geq s\}$. Define $\tilde\tau:=\tau(\underline\sigma^2t)$. It is easy to see that $\tilde\tau\leq t$ and $\int_0^{\tilde\tau} \theta_s dW_s=\tilde W_{\tilde\theta_{\tilde\tau}^2}=\tilde W_{\underline \sigma^2t}$, which is a Brownian motion w.r.t. ${\cal G}_{\underline \sigma ^2t}={\cal F}_{\tilde\tau}$. Therefore
 \begin{eqnarray}\label{zhao46}
&&E[\varphi(x+\int_0^t \theta_s dW_s\ {\rm mod}\ 2\pi)]\nonumber\\
&=&E\left [E[\varphi(x+\int_0^{\tilde\tau} \theta_s dW_s+\int_{\tilde\tau}^t \theta_s dW_s\ {\rm mod}\ 2\pi)|{\cal F}_{\tilde\tau}]\right]\nonumber\\
&=&E\left [E[\varphi(x+y+\int_{\tilde\tau}^t \theta_s dW_s\ {\rm mod}\ 2\pi)]|_{y=\int_0^{\tilde\tau} \theta_s dW_s}\right]\nonumber\\
&=&E\left [E[\varphi(x+y+\int_{\tilde\tau}^t \theta_s dW_s\ {\rm mod}\ 2\pi)]|_{y=\tilde W_{\underline \sigma^2t}}\right]\nonumber\\
&=&\int_0^{2\pi} p(\underline \sigma^2 t, y) E[\varphi(x+y+\int_{\tilde\tau}^t \theta_s dW_s\ {\rm mod}\ 2\pi)] dy\nonumber\\
&=&E[\int_0^{2\pi} p(\underline \sigma^2 t, y) \varphi(x+y+\int_{\tilde\tau}^t \theta_s dW_s\ {\rm mod}\ 2\pi)dy]\nonumber\\
&=&E[\int_0^{2\pi} p(\underline \sigma^2 t, y) \varphi(x+y+z\ {\rm mod}\ 2\pi)dy|_{z=\int_{\tilde\tau}^t \theta_s dW_s}]\nonumber\\
&=& E\left[E[\varphi(x+z+\tilde W_{\underline \sigma ^2t}\ {\rm mod}\ 2\pi)]|_{z=\int_{\tilde\tau}^t \theta_s dW_s}\right],
\end{eqnarray}
where $p(\cdot, \cdot)$ is the heat kernel of Brownian motion $\tilde W_{\cdot}$ on $S^1$ starting at position $0$ at time $0$. In fact,
\begin{eqnarray*}
&&E[\varphi(x+z+\tilde W_{\underline \sigma ^2t}\ {\rm mod}\ 2\pi)]\\
&=& \int_{S^1} p(\underline \sigma ^2t, y-x-z\  {\rm mod}\ 2\pi) \varphi(y) dy\\
&=&
\sum\limits _{k\in {\mathbb Z}} \int_0^{2\pi}{1\over \sqrt{2\pi\underline \sigma ^2t}}{\rm e}^{-{(x+z\ {\rm mod}\ 2\pi-y-2k\pi)^2\over 2\underline \sigma ^2t}}\varphi (y) dy.
\end{eqnarray*}
 So for any $A_n\in {\cal B}({S^1})$, using inequality $(a-b)^2\geq {1\over 2}a^2-b^2$, we have 
\begin{eqnarray}\label{zhao44}
&&E[I_{A_n}(x+z+\tilde W_{\underline \sigma ^2t}\ {\rm mod}\ 2\pi)]\nonumber\\
&=& 
\sum\limits _{k\in {\mathbb Z}} \int_0^{2\pi}{1\over \sqrt{2\pi\underline \sigma ^2t}}{\rm e}^{-{(x+z\ {\rm mod}\ 2\pi-y-2k\pi)^2\over 2\underline \sigma ^2t}}I_{A_n} (y) dy\nonumber\\
&\leq&  \int_0^{2\pi}I_{A_n} (y){1\over \sqrt{2\pi\underline \sigma ^2t}}{\rm e}^{{(x+z\ {\rm mod}\ 2\pi-y)^2\over 2\underline \sigma ^2t}}\sum\limits _{k\in {\mathbb Z}}{\rm e}^{-{(2k\pi)^2\over 4\underline \sigma ^2t}} dy\nonumber\\
&\leq& {\rm Leb}(A_n) {1\over \sqrt{2\pi\underline\sigma^2 t}}{\rm e}^{{(2\pi)^2\over 2\underline\sigma^2 t}} {1\over {1-e^{-{\pi^2\over \underline \sigma ^2t
}}}}.
\end{eqnarray}
 Note the upper bound of (\ref{zhao44}) is independent of $x, z$ and $\theta _\cdot$, so it follows from 
(\ref{zhao45}) and (\ref{zhao46}) that
\begin{eqnarray*}
&&
(T_tI_{A_n})(x)\\
&=&\sup_{\theta^2_\cdot \in \{ {\rm adapted \ processes \ with \ values\  in }\ [\underline\sigma^2, 
\overline\sigma^2]\}}
E\left [
E[I_{A_n}(x+z+\tilde W_{\underline \sigma ^2t}\ {\rm mod}\ 2\pi)]|_{z=\int_\tau^t \theta_s dW_s}\right ]\\
&\leq& {\rm Leb}(A_n) {1\over \sqrt{2\pi\underline\sigma^2 t}}{\rm e}^{{(2\pi)^2\over 2\underline\sigma^2 t}} {1\over {1-e^{-{\pi^2\over \underline \sigma ^2t
}}}}\nonumber\\
&\to & 0, 
\end{eqnarray*}
since ${\rm Leb}(A_n)\to 0$ as $n\to\infty$. 
\end{proof}
The following lemma is vitally important. It is the strong Feller property in the classical case of linear probability space. But in the sublinear setting, it is not clear whether or not this holds in general. The proof of this result is quite involved where the regularity of $T_t$ (Lemma \ref{lem3.6}) plays an important role.
\begin{lem}\label{cf1}
Assume $\underline \sigma^2>0$ and $\varphi\in L_b({\cal B}(S^1))$, then for any $t>0$, $u(t,x)=T_t \varphi(x)$ given by (\ref{eqn3.7}) is $C^{1,2}$ and a classical solution of (\ref{eqn3.8}).
\end{lem}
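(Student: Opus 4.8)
The plan is to prove the lemma by approximating the bounded measurable datum $\varphi$ by Lipschitz data, for which the $C^{1,2}$ regularity and classical solvability of (\ref{eqn3.8}) are already available from Krylov's theory (\cite{krylov1},\cite{krylov}), and then to pass to the limit using uniform interior a priori estimates for the uniformly parabolic equation (\ref{eqn3.8}). First I would fix $M=\|\varphi\|_\infty$ and choose $\varphi_n\in C_{b,lip}(S^1)$ with $\|\varphi_n\|_\infty\le M$ and $\|\varphi_n-\varphi\|_{L^1({\rm Leb})}\to0$; this is standard (approximate in $L^1$ by continuous functions, mollify to make them Lipschitz, and truncate at level $\pm M$, which preserves both the Lipschitz property and the sup bound). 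For each $n$, the function $u_n(t,x)=T_t\varphi_n(x)$ is, by the discussion preceding Lemma \ref{lem3.6}, of class $C^{1,2}$ on $(0,\infty)\times S^1$ and a classical solution of (\ref{eqn3.8}), and $\|u_n\|_\infty\le\|\varphi_n\|_\infty\le M$ uniformly in $n$.

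The next step is a stability estimate in $L^1$ coming directly from the heat-kernel bound established inside the proof of Lemma \ref{lem3.6}. Since for each fixed $(t,x)$ the map $\varphi\mapsto T_t\varphi(x)$ is a sublinear expectation, subadditivity, positive homogeneity and monotonicity give $|T_t\varphi(x)-T_t\psi(x)|\le T_t|\varphi-\psi|(x)$. By linearity in the indicator, the bound (\ref{zhao44}) upgrades to $E[g(x+z+\tilde W_{\underline\sigma^2 t}\ {\rm mod}\ 2\pi)]\le C(t)\,\|g\|_{L^1({\rm Leb})}$ for every nonnegative bounded measurable $g$, where
$$C(t)=\frac{1}{\sqrt{2\pi\underline\sigma^2 t}}\,{\rm e}^{\frac{(2\pi)^2}{2\underline\sigma^2 t}}\,\frac{1}{1-{\rm e}^{-\pi^2/(\underline\sigma^2 t)}}.$$
Feeding this through the representation (\ref{zhao45})--(\ref{zhao46}) and taking the supremum over the admissible $\theta$ yields $T_t|\varphi-\psi|(x)\le C(t)\|\varphi-\psi\|_{L^1({\rm Leb})}$, hence
$$\sup_{x\in S^1}|u_n(t,x)-u(t,x)|\le C(t)\,\|\varphi_n-\varphi\|_{L^1({\rm Leb})}.$$
As $C(t)$ is continuous, hence bounded on every compact interval $[\delta,R]\subset(0,\infty)$, this shows $u_n\to u$ uniformly on $[\delta,R]\times S^1$ for each $0<\delta<R<\infty$.

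Finally I would invoke the interior regularity theory for (\ref{eqn3.8}). The nonlinearity $p\mapsto \tfrac12\overline\sigma^2 p^+-\tfrac12\underline\sigma^2 p^-=\sup_{\underline\sigma^2\le a\le\overline\sigma^2}\tfrac12 a\,p$ is convex and, because $\underline\sigma^2>0$, uniformly elliptic with ellipticity constants $\underline\sigma^2,\overline\sigma^2$. Hence the Evans--Krylov/Krylov interior estimates (\cite{krylov1},\cite{krylov}) furnish a constant depending only on $M$, $\underline\sigma^2$, $\overline\sigma^2$ and $\delta,R$, but not on $n$, bounding the parabolic $C^{2+\alpha,1+\alpha/2}$ norm of $u_n$ on $[\delta,R]\times S^1$. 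By Arzel\`a--Ascoli a subsequence of $\{u_n\}$ converges in $C^{1,2}_{loc}((0,\infty)\times S^1)$; its limit must coincide with $u$ because $u_n\to u$ locally uniformly, and since the derivatives converge as well, $u$ is itself $C^{1,2}$ on $(0,\infty)\times S^1$ and, letting $n\to\infty$ in the equation satisfied by $u_n$ and using continuity of the nonlinearity, solves (\ref{eqn3.8}) classically.

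I expect the main obstacle to be the uniform-in-$n$ interior estimate: one must ensure the a priori $C^{2+\alpha}$ bound depends only on the sup norm and the ellipticity constants and not on any modulus of continuity of the data, which is precisely what the interior Evans--Krylov estimate for convex uniformly parabolic equations delivers for $t>0$. Since $S^1$ is compact there is no spatial boundary, so only the degeneracy of $C(t)$ as $t\downarrow0$ must be avoided, and this is handled by working on $[\delta,R]\times S^1$. A secondary technical point is the justification of the $L^1$ contraction, namely that (\ref{zhao44}) bounds the full circle heat kernel uniformly in $x$ and $z$; this is already contained in the proof of Lemma \ref{lem3.6}.
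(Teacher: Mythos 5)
Your proof is correct, but it takes a genuinely different route from the paper's. The paper approximates $\varphi$ \emph{monotonically} in three steps (simple functions, then indicators of finite unions of open intervals controlled via Lemma \ref{lem3.6}, then continuous functions), which only yields pointwise convergence $u_n\to u$; to compensate, it uses Krylov's interior bound $|D_t u_n(\delta,x)|+|D_x u_n(\delta,x)|\le M$ (uniform in $n$) to get equicontinuity of $u_n(\delta,\cdot)$, concludes that $T_\delta\varphi$ is continuous in $x$, and then bootstraps through the semigroup identity $T_t\varphi=T_{t-\delta}T_\delta\varphi$, invoking Krylov's theorem a second time with the continuous datum $T_\delta\varphi$. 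You instead make the approximation quantitative: the kernel bound (\ref{zhao44}) inside the proof of Lemma \ref{lem3.6} is upgraded to an $L^1$-to-$L^\infty$ smoothing estimate $\sup_x|T_t\varphi(x)-T_t\psi(x)|\le C(t)\|\varphi-\psi\|_{L^1}$ (legitimate, since $|T_t\varphi-T_t\psi|\le T_t|\varphi-\psi|$ by sub-additivity and monotonicity, and the bound in (\ref{zhao44}) is uniform in $x$, $z$ and $\theta_\cdot$), giving uniform convergence $u_n\to u$ on $[\delta,R]\times S^1$ in one stroke; you then pass to the limit in $C^{1,2}$ by uniform interior Evans--Krylov/Krylov estimates plus Arzel\`a--Ascoli, with no need for the semigroup bootstrap. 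The trade-off: your argument requires the full interior $C^{2+\alpha,1+\alpha/2}$ estimate with constants depending only on $\|u_n\|_\infty$, the ellipticity constants and the distance to $\{t=0\}$ — which does hold here, as the nonlinearity is convex and uniformly parabolic (and the problem is one-dimensional in space with no spatial boundary), and is exactly the ``strong regularity'' of Krylov that the paper cites — whereas the paper gets by with first-order derivative bounds at the price of a more elaborate approximation scheme and a second appeal to Krylov's theorem. What your version buys is a cleaner limit passage and an explicit stability rate in $\|\varphi_n-\varphi\|_{L^1}$, a fact of independent interest that the monotone scheme does not produce. Both proofs ultimately rest on the same two pillars — the non-degeneracy estimate behind Lemma \ref{lem3.6} and Krylov's interior regularity theory — so the difference lies in how they are combined, not in the essential ingredients.
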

\begin{proof}
Consider $\varphi\in L_b({\cal B}(S^1))$. First note there exists an increasing sequence of simple functions $\varphi_n^{(1)} \uparrow\varphi$ with $||\varphi_n^{(1)}||_\infty\leq ||\varphi||_\infty$. Thus by the monotone convergence of sublinear expectation
 we know that
$$u_n^{(1)}(t,x)=\E \varphi_n^{(1)}(x+B_t)\uparrow \E \varphi(x+B_t)=u(t,x).$$
Denote 
$$\varphi_n^{(1)}=\sum_{i=1}^{2^n} x_i I_{A_i^1},$$
where $\{A_i^1\}$ are Borel sets on $S^1$. By a standard result (c.f. Taylor \cite{taylor}), there exists a finite number of open intervals whose union is denoted by $B_i^0$ such that $A_i^1 \bigtriangleup B_i^0$ can be sufficiently small. Define
$$\varphi_n^{(2)}=\sum_{i=1}^{2^n} x_i I_{B_i^0}.$$
Then 
$$|\E\varphi_n^{(2)}(x+B_t)-\E \varphi_n^{(1)}(x+B_t)|\leq \sum_{i=1}^{2^n} |x_i| \E I_{A_i^1 \bigtriangleup B_i^0} (x+B_t).$$
As the Brownian motion is nondegenerate ($\underline\sigma^2>0$), so by Lemma \ref{lem3.6}, the expectation $\E I_{A_i^1 \bigtriangleup B_i^0} (x+B_t)$ can be sufficiently small since the Lebesgue measure of ${A_i^1 \bigtriangleup B_i^0}$ is sufficiently small. Thus $u_n^{(2)}(t,x)=\E \varphi_n^{(2)}(x+B_t)$ is sufficiently close to $u_n^{(1)}(t,x)$. 

Now note that one can find easily an increasing (or decreasing) sequence of continuous functions to approximate $I_{B_i^0}$. Thus there exists an increasing sequence of continuous functions $\varphi_{nm}^{(3)}\uparrow \varphi_{n}^{(2)}$ as $m\to \infty$ with $||\varphi_{nm}^{(3)}||_\infty\leq ||\varphi_n^{(2)}||_\infty$. By monotone convergence theorem, 
$$u_{nm}^{(3)}(t,x) =\E\varphi_{nm}^{(3)}(x+B_t)\uparrow u_{n}^{(2)}(t,x).$$
Summarizing above, we conclude there exists a sequence of continuous functions $\varphi_n$ 
such that 
$$u_{n}(t,x) =\E\varphi_{n}(x+B_t)\to u(t,x)=\E\varphi(x+B_t).$$

For any given $\delta>0$, by Krylov's result of the regularity of fully nonlinear parabolic partial differential equation of non-degenerate type (Krylov \cite{krylov1, krylov}), we know that
$$|D_t u_n(\delta, x)|+|D_x u_n(\delta, x)|\leq M,$$ 
for a constant $M>0$ being independent of $n$ and $x$. Thus the sequence $u_n(\delta, x) =(T_\delta \varphi_n)(x)$ of continuous functions is equi-continuous. Thus its limit  
$u(\delta, x) =(T_\delta \varphi)(x)$ is continuous in $x$. As 
$T_t\varphi=T_{t-\delta}T_\delta \varphi$, by Krylov's result again, we can see that $u(t,x)=T_t \varphi(x)$ given by (\ref{eqn3.7}) is $C^{1,2}$ in $(t,x)$ for any $t>0$.
\end{proof}

\begin{thm}\label{hz24}
Let $T_t$ be the Markovian semi-group defined by (\ref{eqn3.7}) with the $G$-Brownian motion
on the unit circle $S^1=[0,2\pi]$ with normal distribution $N(0,[\underline \sigma^2 t,\overline \sigma ^2 t])$, where
$\overline \sigma ^2   \geq \underline \sigma^2>0$ are constant. Then  
\begin{eqnarray}\label{cf17}
\tilde T\varphi={1\over {2\pi}}\int_0^{2\pi} (T_\delta \varphi)(x)dx,\ \varphi\in L_b({\cal B}(S^1)),\ \delta>0,
\end{eqnarray}
is independent of $\delta>0$ and is the unique invariant expectation of $T_t,\ t\geq 0$. Moreover, $T_t\varphi\to \tilde T\varphi$ as 
$t\to \infty$. 
\end{thm}
\begin{proof}
For each $\varphi\in L_b({\cal B}(S^1))$, define 
$m(\varphi) $ as integral of $\varphi$ with respect to the Lebesgue measure (normalised)
\begin{eqnarray}
m(\varphi)={1\over 2\pi}\int _{0}^{2\pi}\varphi(x)dx.
\end{eqnarray}
Set
\begin{eqnarray*}
{T}^{\overline\sigma}_t\varphi(x)=\int _{0}^{2\pi}p^{\overline\sigma}(t,x,y)\varphi(y)dy,
\end{eqnarray*}
and
\begin{eqnarray*}
T^{\underline\sigma}_t\varphi(x)=\int _{0}^{2\pi}p^{\underline\sigma}(t,x,y)\varphi(y)dy,
\end{eqnarray*}
where $p^{\overline\sigma}$ and $p^{\underline\sigma}$, the density of the transition probabilities of Brownian motions $\overline\sigma W_{\cdot}$ and $\underline\sigma W_{\cdot}$, respectively, are given by 
\begin{eqnarray}
p^{\overline\sigma}(t,x,y)=\sum\limits _{k\in {\mathbb Z}} {1\over \sqrt{2\pi\overline \sigma^2t}}{\rm e}^{-{(x-y-2k\pi)^2\over 2\overline \sigma ^2t}},
\end{eqnarray}
and
\begin{eqnarray}
p^{\underline\sigma}(t,x,y)=\sum\limits _{k\in {\mathbb Z}} {1\over \sqrt{2\pi\underline \sigma^2t}}{\rm e}^{-{(x-y-2k\pi)^2\over 2\underline \sigma ^2t}}.
\end{eqnarray}
Here $W_.$ is the classical Brownian motion on $S^1$. These standard Poisson summation formulae of heat kernels can be obtained using Fourier analysis or stochastic method (c.f. Elworthy \cite{david}) 
It is easy to see that if $\varphi$ is convex, then $T^{\overline\sigma}_t\varphi(x)$ is a convex function of $x$ for each $t$ and $T_t\varphi(x)=T^{\overline\sigma}_t\varphi(x)$. If $\varphi$ is concave, then $T_t\varphi(x)=T^{\underline\sigma}_t\varphi(x)$ which is a concave function of $x$ for each $t$.
Moreover, it is well-known that the Lebesgue measure is the invariant measure of Brownian motion on $S^1$ (c.f. Proposition 4.5 and Corollary of Theorem 4.6 in Chapter V in \cite{ikeda}), so
\begin{eqnarray*}
mT^{\underline\sigma}_t\varphi=m\varphi,\ \ mT^{\overline\sigma}_t\varphi=m\varphi, \ \ {\rm for } \ t\geq 0,
\end{eqnarray*}
and as $t\to\infty$, for any $x\in [0,2\pi]$
\begin{eqnarray*}
T^{\underline\sigma}_t\varphi(x)\to m\varphi,\ \ T^{\overline\sigma}_t\varphi(x)\to m\varphi.
\end{eqnarray*}
Thus if $\varphi$ is convex or concave,
\begin{eqnarray}\label{hz21}
mT_t\varphi=m\varphi,
\end{eqnarray}
and
as $t\to\infty$, for any $x\in [0,2\pi]$
\begin{eqnarray}\label{hz22}
T_t\varphi(x)\to m\varphi.
\end{eqnarray}

Now we consider $\varphi$ being a polynomial function defined in $[0,2\pi]$. It is well-known that there exist a convex function $\varphi_1$ and a concave function
$\varphi_2$ such that
$\varphi=\varphi_1+\varphi_2$. By the sublinearity of $T_t$, we have
\begin{eqnarray}
T_t\varphi_1(x)-T_t(-\varphi_2)(x)\leq T_t\varphi(x)\leq T_t\varphi_1(x)+T_t\varphi_2(x).
\end{eqnarray}
It follows from the linearity of $m$ that
\begin{eqnarray*}
mT_t\varphi\leq mT_t\varphi_1+mT_t\varphi_2=m\varphi_1+m\varphi_2=m(\varphi_1+\varphi_2)=m\varphi,
\end{eqnarray*}
and
\begin{eqnarray*}
mT_t\varphi\geq mT_t\varphi_1-mT_t(-\varphi_2)=m\varphi_1-m(-\varphi_2)=m(\varphi_1+\varphi_2)=m\varphi.
\end{eqnarray*}
So (\ref{hz21}) holds true for any polynomial function $\varphi$. It then follows from  an approximation argument using the Weierstrass theorem that (\ref{hz21}) is also true for $\varphi \in C([0,2\pi])$.

Moreover, for any polynomial function $\varphi$, as above $\varphi=\varphi_1+\varphi_2$,
$\varphi_1$ is  convex and $\varphi_2$ is concave, we have when
$t\to \infty$,
\begin{eqnarray*}
T_t\varphi_1(x)+T_t\varphi_2(x)\to m\varphi_1+m\varphi_2=m(\varphi_1+\varphi_2)=m\varphi,
\end{eqnarray*}
and
\begin{eqnarray*}
T_t\varphi_1(x)-T_t(-\varphi_2(x))\to m\varphi_1-m(-\varphi_2)=m(\varphi_1+\varphi_2)=m\varphi.
\end{eqnarray*}
Thus  (\ref{hz22}) holds for any polynomial $\varphi$.

Now we consider $\varphi \in C([0,2\pi])$. First note by the Weierstrass approximation theorem,  
for any $\epsilon >0$, there exists a polynomial $\tilde\varphi$
such that $\sup\limits _{x\in [0,2\pi]}|\tilde \varphi(x)-\varphi(x)|<{1\over 3}\epsilon$. So $|T_t\tilde \varphi(x)-T_t\varphi(x)|<{1\over 3}\epsilon$
for any $x,t$ and $|m\tilde \varphi(x)-m\varphi(x)|<{1\over 3}\epsilon$. On the other hand, for such $\tilde\varphi$,
there exists $R>0$ such that for any $t\geq R$,
$|T_t\tilde \varphi(x)-m\tilde \varphi|<{1\over 3}\epsilon$.
Thus for $t\geq R$,
\begin{eqnarray}
|T_t\varphi(x)-m \varphi |
\leq |T_t\varphi(x)-T_t\tilde \varphi(x)|+|T_t\tilde \varphi(x)-m\tilde \varphi|+|m\tilde \varphi-m\varphi |<\epsilon.
\end{eqnarray}
This leads to (\ref{hz22}) for any $\varphi \in C([0,2\pi])$.

Now consider $\varphi\in L_b({\cal B}(S^1))$. By Lemma \ref{cf1}, for any $\delta>0$,
$(T_\delta \varphi)(x)$ is continuous in $x$.
 Applying (\ref{hz22}) for continuous function, we have
$$T_t\varphi=T_{t-\delta}T_\delta \varphi\to m(T_\delta \varphi)=(mT_\delta )\varphi,\ {\rm as}\ t\to \infty.$$
So the last statement of the theorem is verified. 
But $T_t\varphi$ is independent of $\delta$, then $m(T_\delta \varphi)$ is independent of $\delta>0$, which means $m(T_{\delta_1})=m(T_{\delta_2})$ for any $\delta_1, \delta_2>0$. Define $\tilde T: L_b({\cal B}(S^1))\to \R^1$
$$\tilde T\varphi=(mT_\delta)\varphi,\ \delta>0.$$
Then for any $t\geq 0$,
$$\tilde TT_t\varphi=mT_\delta T_t\varphi=mT_{t+\delta}\varphi=\tilde T\varphi.$$
Thus $\tilde T$ is an invariant expectation. The uniqueness follows from the convergence of $T_t\varphi$.
\end{proof}
\begin{rmk} \label{zhao43}
(i) From the proof, we can see that when $\varphi\in C([0,2\pi])$, $\tilde T\varphi={1\over {2\pi}}\int_0^{2\pi} \varphi (x)dx$.

(ii)
We do not attempt to give the result in Theorem \ref{hz24} in a great generality e.g. of Brownian motions on a compact manifold. Here we only show such a result as an example. More general case will be treated in future publications.
\end{rmk}

As we have proved the invariant expectation $\tilde T$ of $G$-Brownian motion on $S^1$ exists, we can follow the procedure in Section \ref{zhao1751} to construct the canonical process $\hat X$ and the canonical dynamical system on the path space.

Applying Theorem \ref{hz20}, in the following 
we prove that the $G$-Brownian motion on the unit circe is ergodic. Firstly, we need the following proposition
where the no mean-uncertainty condition needed in Theorem \ref{hz20} is proved in (ii) below. Recall $\Omega^*=C(\R, \R^d)$ with the topology given in (\ref {eqn3.7a}).

\begin{prop}\label{prop3.26}
Consider the G-Brownian motion on the unit circle $S^1=[0,2\pi]$ with normal distribution $N(0, [\underline \sigma^2 t,\overline \sigma^2 t])$, where $\overline \sigma^2\geq\underline \sigma^2>0$. The following results hold:

(i) The stationary process $\hat X$ defined in (\ref{cf12}) has a continuous modification $\tilde X$ and is stochastically continuous. 

(ii) For each $\varphi\in L_b({\cal B}(S^1))$, $\varphi(\tilde X(0))$ has no mean-uncertainty with respect to the invariant expectation $\tilde {\cal E}$.

(iii) There exists a weakly compact family of probability measures ${\cal P}$ on $(\Omega^*,{\cal B}(\Omega^*))$ such that
\begin{eqnarray*}
\hat \E^{\tilde {T}}[\xi]=\sup\limits_{P\in {\cal P}}E_{P}[\xi], \ \ \xi\in Lip_{b, cyl}(\Omega^*).
\end{eqnarray*}

(iv) The invariant expectation $\tilde{T}$ is regular.

(v) 
Define for each $\xi\in {\cal B}(\Omega^*)$, the upper expectation 
\begin{eqnarray}\label{cf15}
\E^{*}[\xi]=\sup\limits _{P\in {\cal P}}E_P[\xi]. 
\end{eqnarray}
Then for any $F_n\in {\cal B} (\Omega^*)$ such that  $I_{F_n}\downarrow 0$, then $\E^{*} [I_{F_n}]\downarrow 0$.
Thus $\E^{*}$ is regular.
\end{prop}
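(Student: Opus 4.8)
The plan is to treat the five assertions in the order stated, since each feeds into the next, and to use the non-degeneracy $\underline\sigma^2>0$ crucially throughout. For (i) I would invoke a Kolmogorov-type continuity criterion adapted to the sublinear setting: expressing the stationary increments through the $G$-normal law of $B$, one estimates $\E^{\tilde{\cal E}}[|\hat X(t)-\hat X(s)|^{2p}]\le C_p|t-s|^{p}$ for $p\ge 2$ from the moment bounds of the $N(0,[\underline\sigma^2,\overline\sigma^2])$ distribution (working with a lift of the circle-valued path so that the ``$\mathrm{mod}\,2\pi$'' does not interfere on small scales). Since $p>1$, the $G$-version of Kolmogorov's theorem (\cite{denis}) then produces a quasi-surely continuous modification $\tilde X$. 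Assertion (iv) is the most direct consequence of Lemma \ref{lem3.6}: by (\ref{cf17}), $\tilde{\cal E}[I_{A_n}]=\tilde T I_{A_n}=m(T_\delta I_{A_n})=\frac1{2\pi}\int_0^{2\pi}(T_\delta I_{A_n})(x)\,dx$, and the uniform bound $(T_\delta I_{A_n})(x)\le C(\delta)\,\mathrm{Leb}(A_n)$ integrates to $\tilde{\cal E}[I_{A_n}]\le C(\delta)\,\mathrm{Leb}(A_n)$; as $A_n\downarrow\emptyset$ forces $\mathrm{Leb}(A_n)\downarrow 0$, strong regularity of $\tilde{\cal E}$ follows.

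For (ii) the key reduction is that no mean-uncertainty of $\varphi(\tilde X(0))$ is equivalent to $\tilde T\varphi=-\tilde T(-\varphi)$, i.e.\ to the equality $m(T_\delta\varphi)=m\varphi$ for every bounded Borel $\varphi$. This is already known for $\varphi\in C([0,2\pi])$ by Theorem \ref{hz24} and Remark \ref{zhao43}(i). To pass to Borel $\varphi$ I would use the approximation scheme built in the proof of Lemma \ref{cf1}, which yields continuous $\varphi_n$ with $(T_\delta\varphi_n)(x)\to(T_\delta\varphi)(x)$ pointwise and $\|T_\delta\varphi_n\|_\infty\le\|\varphi\|_\infty$. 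The decisive observation is that $m$ is the \emph{linear} Lebesgue integral, so the ordinary dominated convergence theorem applies to both sides of $m(T_\delta\varphi_n)=m\varphi_n$ and yields $m(T_\delta\varphi)=m\varphi$ in the limit; the sublinearity of $T_\delta$ is thus washed out once one integrates against $m$.

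For (iii) I would transport $\E^{\tilde{\cal E}}$ to $\Omega^*=C(\R,\R^d)$ via the continuous modification from (i), so that the functional acts on $C_b(\Omega^*)$ over a Polish space; the moment estimates of (i) give, through Arzel\`a--Ascoli, tightness of the finite-dimensional approximating measures, and together with regularity this is exactly the hypothesis of the Denis--Hu--Peng representation (\cite{denis}), producing a weakly compact ${\cal P}$ with $\E^*[\cdot]=\sup_{P\in{\cal P}}E_P[\cdot]$. Assertion (v) is the hard part, and I expect it to be the main obstacle, because weak compactness alone is insufficient: for a family of point masses the capacity $\sup_P P(\cdot)$ fails to be continuous from above along Borel sets shrinking to $\emptyset$, as mass escapes through shrinking open sets. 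What rescues the present case is $\underline\sigma^2>0$, which via Lemma \ref{lem3.6} makes every time-marginal of every $P\in{\cal P}$ uniformly absolutely continuous with respect to Lebesgue measure. My plan is first to establish the weaker regularity (Definition \ref{zhao42}) of $\E^*$ on $C_b(\Omega^*)$: given $X_n\downarrow 0$ in $C_b$, tightness yields a compact $K$ with $\sup_P P(K^c)$ small, Dini's theorem gives $X_n\downarrow 0$ uniformly on $K$, and the two combine to force $\E^*[X_n]\downarrow 0$. I would then promote this to strong regularity (Definition \ref{zhao41}) using regularity as the approximation device announced after that definition, relying on two generic facts about $\mathbb V=\E^*[I_{(\cdot)}]$ — inner regularity by compact sets (each $P$ is Radon, and the compact can be pulled through the supremum) and continuity from above along decreasing \emph{closed} sets (portmanteau plus weak compactness). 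The genuine difficulty is to transfer these single-time/closed-set facts to an arbitrary Borel path event $F_n\downarrow\emptyset$: I would approximate $F_n$ by cylinder events over finitely many times, control the approximation error through path continuity and the Step-A regularity, and invoke the uniform Lebesgue-domination of the marginals from (iv)/Lemma \ref{lem3.6} to preclude the escape of mass that the point-mass example exhibits. Making this finite-time approximation quantitative and uniform in both $n$ and $P\in{\cal P}$ is the crux of the argument.
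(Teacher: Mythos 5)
Your treatments of (i), (iii) and (iv) coincide in substance with the paper's: a fourth-moment estimate $\E^{\tilde{\cal E}}[(\hat X(t)-\hat X(s))^4]\le c|t-s|^2$ plus Peng's Kolmogorov continuity theorem for sublinear expectations gives (i); tightness of the laws of $\tilde X$ via the Kolmogorov--Chentsov moment criterion and passage to the weak closure gives (iii); and (\ref{cf17}) together with the uniform bound of Lemma \ref{lem3.6} gives (iv). Your (ii) is a correct minor variant: where the paper pushes the no-mean-uncertainty property itself through the three-step approximation of Lemma \ref{cf1}, using the closedness of ${\cal H}^p$ (Lemma \ref{lem2.10}), you prove the stronger identity $\tilde T\varphi=m(T_\delta\varphi)=m\varphi$ for all bounded Borel $\varphi$ by passing to the limit in $m(T_\delta\varphi_n)=m\varphi_n$ with the classical dominated convergence theorem for the linear functional $m$; this works because the scheme of Lemma \ref{cf1} supplies uniformly bounded continuous $\varphi_n$ with $T_\delta\varphi_n\to T_\delta\varphi$ pointwise and $m\varphi_n\to m\varphi$, so the sublinearity of $T_\delta$ is indeed washed out by integrating against $m$.

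The genuine gap is (v): your text there is a plan, not a proof, and the step you yourself call the crux --- making the cylinder approximation of a general Borel path event quantitative and uniform in $n$ and in $P\in{\cal P}$ --- is never carried out. You should know, however, that the obstruction you identify (weak compactness does not give continuity from above of $\sup_P P(\cdot)$ along Borel sets, since mass escapes through shrinking sets) is precisely where the paper's own proof of (v) fails. The paper sandwiches $I_{F_n}\le\xi_n$ with $\xi_n(\omega)=n\min\{\rho(\omega,D_n^c),\rho(C_n,D_n^c)\}$ and asserts $\xi_n\downarrow 0$; but if $\omega_0\in\bigcap_n\overline{F_n}$ --- a set that can be nonempty while $\bigcap_n F_n=\emptyset$, for instance $F_n=\{\omega:0<\rho(\omega,\omega_0)<1/n\}$ --- then $\rho(\omega_0,D_n^c)\ge 2/n$ and $\rho(C_n,D_n^c)\ge 1/n$, hence $\xi_n(\omega_0)\ge 1$ for every $n$, so $\xi_n$ does not decrease to $0$ and the regularity of $\E^{*}$ cannot be invoked. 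Your proposed repair is also unlikely to close the hole: uniform Lebesgue-domination of the single-time marginals (Lemma \ref{lem3.6}) controls cylinder events, but not genuinely infinite-dimensional ones such as $F_n=\{\omega:\langle\omega\rangle_1\in(\underline\sigma^2,\underline\sigma^2+1/n)\}$, on which mutually singular constant-volatility priors put full mass; quadratic-variation events of this kind are the standard reason $G$-capacities on path space fail to be continuous from above, and neither your sketch nor the paper's argument addresses whether the particular family ${\cal P}$ constructed in (iii) avoids them. So (v) requires an argument that neither you nor the paper supplies, and the statement itself deserves caution.
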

\begin{proof} (i) 
Note by the sublinear expectation representation theorem, for the sublinear expectation $\E^{\tilde {T}}$ on $(\hat\Omega, 
L_0^1(\hat\Omega ))$, there exists a family of linear expectations $\{E_{\theta}: \theta\in \Theta\}$ such that
\begin{eqnarray}\label{cf16}
\E^{\tilde {T}}[X]=\sup\limits_{\theta\in \Theta}E_{\theta}[X], \ \ X\in L_0^1(\hat \Omega).
\end{eqnarray}
Note further that if $\{\varphi_n\}_{n=1}^{\infty}\subset C_{b,Lip}((S^1)^m)$ satisfies $\varphi_n\downarrow 0$, then by a similar argument
as in the proof of Lemma 3.3 of Chapter I in Peng \cite{peng2010}, 
\begin{eqnarray*}
\E^{\tilde {T}}[\varphi_n(\hat \omega_{t_1}, \hat \omega_{t_2},\dots, \hat \omega_{t_m})]\downarrow 0, \ \ {\rm as } \ n\to \infty,
\end{eqnarray*}
and it follows from (\ref{cf16}) that 
\begin{eqnarray*}
\E^{\tilde {T}}[\varphi_n(\hat \omega_{t_1}, \hat \omega_{t_2},\dots, \hat \omega_{t_m})]
=\sup\limits _{\theta\in \Theta}E_{\theta}[\varphi_n(\hat \omega_{t_1}, \hat \omega_{t_2},\dots, \hat \omega_{t_m})].
\end{eqnarray*}
But for each $\theta \in \Theta$, $E_{\theta}$ is controlled by $\E^{\tilde {T}}$. Thus $E_{\theta}[\varphi_n(\hat \omega_{t_1}, \hat \omega_{t_2},\dots, \hat \omega_{t_m})]\downarrow 0$ as $n\to \infty$. So by the Daniell-Stone
Theorem (c.f. Peng \cite{peng2010}), there is a unique probability measure $Q_{\{t_1,t_2,\dots,t_m\}}$ on $((S^1)^m,{\cal B}((S^1)^m))$ such that 
\begin{eqnarray*}
E_{\theta}[\varphi_n(\hat \omega_{t_1}, \hat \omega_{t_2},\dots, \hat \omega_{t_m})]=E_{Q_{\{t_1,t_2,\dots,t_m\}}}[\varphi_n(\hat \omega_{t_1}, \hat \omega_{t_2},\dots, \hat \omega_{t_m})].
\end{eqnarray*}
Denote $\T=\{\underbar t=\{t_1,t_2,\dots,t_m\}: t_1<t_2<\dots <t_m, m\in {\mathbb N}\}.$ Thus we have a family of finite
dimensional distributions $\{Q_{\underbar t}, \underbar t\in \T\}$.  It is easy to check that $\{Q_{\underbar t}, \underbar t
\in \T\}$ is consistent. By Kolmogorov's consistence theorem, there is a probability measure $Q$ on $(\hat \Omega,\hat 
{\F})$ such that $\{Q_{\underbar t}, \underbar t
\in \T\}$ is the finite dimensional distribution of $Q$. The probability distribution $Q$ is unique
as by Daniell-Stone theorem, its finite dimensional distribution is unique so the uniqueness of $Q$  follows
from the  monotone class theorem. It is now clear that $E_\theta[X]=E_{Q}[X]$ for any $X\in Lip_{b, cyl}(\hat \Omega)$.
Thus it follows from (\ref{cf16}) that 
\begin{eqnarray*}
\E^{\tilde {T}}[X]=\sup\limits_{Q\in {\cal P}_e}E_{Q}[X], \ \ X\in Lip_{b, cyl}(\hat \Omega),
\end{eqnarray*}
where ${\cal P}_e$ is a family of probability measures on $(\hat \Omega, {\cal B} (\hat \Omega))$.
Define the associated capacity:
$$\hat c(A):=\sup_{Q\in {\cal P}_e} Q (A), \ A\in {\cal B} (\hat \Omega),$$
and the upper expectation of each ${\cal B}(\hat \Omega)$-measurable real valued function $X$ which makes the following definition meaningful 
$$\hat\E^{\tilde {T}}[X]=\sup\limits_{Q\in {\cal P}_e}E_{Q}[X].$$
On the space $Lip_{b, cyl}(\hat \Omega)$, $\E^{\tilde {T}}=\hat\E^{\tilde {T}}$.
Consider the canonical process $\hat X$ on $(\hat \Omega, L_0^1(\hat \Omega), \E^{\tilde {T}}, \hat\theta _t)$. 
For  $t\geq s$, by conditional expectation, 
\begin{eqnarray}\label{cf14}
&&
\hat\E^{\tilde {T}}(\hat X(t)-\hat X(s))^4\nonumber\\
&=&\E^{\tilde {T}}(\hat X(t)-\hat X(s))^4\nonumber\\
&=&\E^{\tilde {T}}[\E^{\tilde {T}}[(\hat X(t)-\hat X(s))^4|{\cal F}_s]]\nonumber\\
&=&\E^{\tilde {T}}[T_{t-s}\varphi(y)|_{y=\hat X(s)}]\nonumber\\
&\leq& 
c|t-s|^2,
\end{eqnarray}
where $\varphi(y)=(y-\hat X(s))^4$,
$c>0$ is a constant independent of 
$t$ and $s$. Then by the Kolmogorov continuity theorem for sublinear expectations (Theorem 1.36, Chapter VI, Peng \cite{peng2010}), 
the processes $\hat X$ has a continuous modification, denoted by $\tilde X$ such that $\hat c({\tilde X}_t\neq \hat X_t)=0$.
Note for any $\delta>0$, 
\begin{eqnarray*}
\hat\E^{\tilde {T}}(\hat X(t)-\hat X(s))^4\geq \hat\E^{\tilde {T}}[(\hat X(t)-\hat X(s))^4 I_{\{|\hat X(t)-\hat X(s)|>\delta\}}]\geq \delta^4 \hat\E^{\tilde {T}}I_{\{|\hat X(t)-\hat X(s)|>\delta\}},
\end{eqnarray*}
so
\begin{eqnarray*}
 \hat\E^{\tilde {T}}I_{\{|\hat X(t)-\hat X(s)|>\delta\}}\leq \delta^{-4} \hat\E^{\tilde {T}}(\hat X(t)-\hat X(s))^4,
 \end{eqnarray*}
 thus the stochastic continuity follows from (\ref{cf14}).

(ii). Now we prove for any $\varphi\in L_b({\cal B}(S^1))$, $\varphi(\tilde X(0))$ has no mean-uncertainty. We follow the 3-step approximation procedure of using a sequence of continuous functions to approximate $\varphi$. Note the no mean-uncertainty of $\varphi(\tilde X(0))$ when $\varphi\in C_b(S^1)$ follows from (\ref{cf17}) and the fact that $\tilde T$ is a Lebesgue integral in this case automatically. Adopting the same notation as in the proof of Lemma \ref{cf1}, consider the increasing sequence of continuous functions $\varphi_{nm}^{(3)}\uparrow \varphi_n^{(2)}$, when $m\to\infty$. First note by Remark \ref{zhao43} (i),
\begin{eqnarray}\label{eqn3.20}
\tilde {\cal E}(-\varphi_{nm}^{(3)}(\tilde X(0)))=-\tilde {\cal E}(\varphi_{nm}^{(3)}(\tilde X(0))).
\end{eqnarray}
By Lemma \ref{lem2.10}, we have $\varphi_n^{(2)}(\tilde X(0))$ has no mean uncertainty,
\begin{eqnarray}\label{eqn3.23}
\tilde {\cal E}(-\varphi_{n}^{(2)}(\tilde X(0)))=-\tilde {\cal E}(\varphi_{n}^{(2)}(\tilde X(0))).
\end{eqnarray}
But 
\begin{eqnarray}
|\tilde {\cal E}(\varphi_{n}^{(2)}(\tilde X(0)))-\tilde {\cal E}(\varphi_{n}^{(1)}(\tilde X(0)))|\leq \sum_{i=1}^r |x_i|\tilde {\cal E} (I_{A_i^1 \bigtriangleup B_i^0}(\tilde X(0))),
\end{eqnarray}
and
\begin{eqnarray}
|\tilde {\cal E}(-\varphi_{n}^{(2)}(\tilde X(0)))-\tilde {\cal E}(-\varphi_{n}^{(1)}(\tilde X(0)))|\leq \sum_{i=1}^r |x_i|\tilde {\cal E} (I_{A_i^1 \bigtriangleup B_i^0}(\tilde X(0))),
\end{eqnarray}
so $\varphi_n^{(1)}(\tilde X(0))$ has no mean uncertainty. As $\varphi_{n}^{(1)}\uparrow \varphi$, by Lemma \ref{lem2.10} again, $\varphi(\tilde X(0))$ has no mean uncertainty,
$$\tilde {\cal E}(-\varphi(\tilde X(0)))=-\tilde {\cal E}(\varphi(\tilde X(0))).$$

(iii). In the following we will find a weakly compact family of probability measures  ${\cal P}$ on $(\Omega^*,{\cal B}(\Omega^*))$ such that the upper expectation (\ref{cf15}) gives a sublinear expectation on  ${\cal P}$ on $(\Omega^*,{\cal B}(\Omega^*))$ 
with finite dimensional expectation of $\varphi(\omega_{t_1}^*, \omega_{t_2}^*,\dots, \omega_{t_m}^*)$, $t_1<t_2<\dots<t_m$, to be ${T}_{t_1,t_2,\dots,t_m}^{\tilde {T}}\varphi$ for $\varphi\in L_b({\cal B}((S^1)^m))$.

For each $Q\in {\cal P}_e$, let $Q\circ \tilde X^{-1}$ which is a probability measure on 
$(\Omega^*,{\cal B}(\Omega^*))$ induced by $\tilde X$ from $Q$ and set ${\cal P}_1=\{Q\circ \tilde X^{-1}: Q\in {\cal P}_e\}$. 
Then similar to (\ref{cf14}), we have 
\begin{eqnarray*}
\hat\E^{\tilde {T}}(\tilde X(t)-\tilde  X(s))^4=\hat\E^{\tilde {T}}(\hat X(t)-\hat  X(s))^4
\leq  c|t-s|^2, \ \ t,s\in {\mathbb R}.
\end{eqnarray*}
Applying the moment criterion for the tightness of Kolmogorov-Chentsov's type, we conclude that ${\cal P}_1$
as a family of probability measures on $(\Omega^*,{\cal B}(\Omega^*))$ is tight. Denote ${\cal P}=\bar{\cal P}_1$ the closure of 
${\cal P}_1$ under the topology of weak convergence. Then ${\cal P}$ is weakly compact. Note 
\begin{eqnarray*}
\hat\E^{\tilde {T}}[\xi]=\sup\limits_{P\in {\cal P}_1}E_{P}[\xi], \ \ \xi\in Lip_{b, cyl}(\Omega^*).
\end{eqnarray*}
For each $\xi\in Lip_{b, cyl}(\Omega^*)$, from Lemma 3.3 of Chapter I in \cite{peng2010}, we get $ \hat\E^{\tilde {T}} [|\xi-(\xi\wedge N)\vee(-N)|]\downarrow 0$ as $N\to \infty$. So 
\begin{eqnarray*}
\hat\E^{\tilde {T}}[\xi]=\sup\limits_{P\in {\cal P}}E_{P}[\xi], \ \ \xi\in Lip_{b, cyl}(\Omega^*).
\end{eqnarray*}

(iv). 
 For any $A_n\in {\cal B} (S^1)$ such that $I_{A_n}\downarrow 0$, then by (\ref{cf17}) and Lemma \ref{lem3.6}, we have $\tilde{T} [I_{A_n}]\downarrow 0$, i.e. $\tilde{ T}$ is regular.

(v). For $\cal P$ given in (iii), we define the associated $G$-capacity
$$ c^*(F):=\sup_{P\in \cal P} P(F), \ F\in {\cal B}(\Omega^*),$$
and upper expectation for each ${\cal B}(\Omega^*)$-measurable real valued 
function $\xi$ which makes the following definition meaningful:
$$\E^{*}[\xi]:= \sup_{P\in \cal P} E_P[\xi].$$
On $Lip_{b, cyl}(\Omega^*)$, $\E^{*}=\hat\E^{\tilde {T}}$ and  as ${\cal P}$ 
is a weakly compact family of probability measures on $(\Omega^*, {\cal B}(\Omega^*))$, we have for any continuous $\xi_n$ and $\xi_n\downarrow 0$, $\E^{*}[\xi_n]\downarrow 0$ as $n\to \infty$.
Now consider for any $F_n\in {\cal B}(\Omega^*)$, such that $I_{F_n}\downarrow 0$. 
Define
\begin{eqnarray*}
C_n=\{\omega \in \Omega ^*: \rho(\omega, F_n)\leq {1\over n}\}, 
\ 
D_n=\{\omega \in \Omega ^*: \rho(\omega, F_n)< {2\over n}\}.
\end{eqnarray*}
Moreover, define
\begin{eqnarray*}
\xi_n(\omega)=n[{\rm min}
\{{\rho(\omega, D_n^c), \rho(C_n, D_n^c)}\}].
\end{eqnarray*}
Then it is easy to see that $\xi_n(\omega)$ is continuous in $\omega \in \Omega ^*$ and $I_{F_n}\leq \xi_n$. As when $\xi_n\downarrow 0$, we have that $\E^{*}[\xi_n]\downarrow 0$ as $n\to \infty$, it follows that $\E^{*} [I_{F_n}]\downarrow 0$.
\end{proof}
From the result of Proposition \ref{prop3.26} and Theorem \ref{prop3.6}, we can conclude that the canonical dynamical system generated by the semigroup of the $G$-Brownian motion on the unit circle is continuous.

\begin{thm}\label{thm3.27}
The invariant expectation of the $G$-Brownian motion on the unit circle
$S^1=[0,2\pi]$ with normal distribution $N(0,[\underline \sigma^2 t,\overline \sigma ^2 t])$, where
$\overline \sigma ^2   \geq \underline \sigma^2>0$ are constant, is ergodic.
\end{thm}
\begin{proof} 
Consider $\varphi\in L_b({\cal B}(S^1))$ with $T_t \varphi=\varphi$ and $T_t (-\varphi)=-\varphi, t\geq 0$. 
From the convergence result 
that as $t\to \infty$, $T_t\varphi\to \tilde T\varphi$ in Theorem \ref{hz24}, it is easy to know that $\varphi=\tilde T\varphi$ so $\varphi$ is constant. 
By Theorem \ref {hz20}, the invariant expectation is ergodic.
\end{proof}

\begin{rmk}
Following the regularity result of  $\E^{*}$ in Proposition \ref{prop3.26}, 
and the ergodicity results for the G-Brownian motion on the unit circle, it follows that SLLN holds by Theorem \ref{thm4.7}. 
\end{rmk}

Inspired by Theorem \ref{hz20},  we observe that the study of the ergodicity of the invariant expectation $\tilde {T}$ is equivalent to the study
of the spectrum of the semigroup ${T}_t$ on the space of $L_b({\cal B}(\R^d))$.  It is noted that due to the constant preserving property of the sublinear expectation, the
sublinear semigroup ${T}_t$ on $L_b({\cal B}(\R^d))$ has eigenvalue $1$. Theorem \ref{hz20} says that ergodicity is equivalent to $1$ being a simple eigenvalue of $T_t$ on $L_b({\cal B}(\R^d))$ as $|\varphi(X(0))|^2$ has no mean uncertainty.

Now we consider the relation of the eigenvalues of $T_t$ and its infinitesimal generator $\mathbb G$. 
First assume $1$ is a simple eigenvalue of $T_t$.
Recall ${\mathbb G}(u)={1\over 2}\overline\sigma ^2u_{xx}^+-{1\over 2}\underline\sigma ^2u_{xx}^-$ and $u(t,x)={T}_t\varphi(x)$ satisfying (\ref{eqn3.8}).
It is easy to see that ${\mathbb G}(c)=0$ for any constant $c$. This suggests that $0$ is an eigenvalue of the generator ${\mathbb G}$ in the space of twice differentiable functions.
However, if $\varphi$ is continuous and a viscosity solution of ${\mathbb G}(\varphi)=0$, it is easy to see that ${T}_t\varphi=\varphi$. So $\varphi$ is constant. This means $0$ is a simple eigenvalue of $\mathbb G$. Conversely, now assume $0$ is a simple eigenvalue of $\mathbb G$. Consider $\varphi$ as a continuous function satisfying $T_t\varphi=\varphi$. As $T_t\varphi$ is a solution of (\ref{eqn3.8}), so $\mathbb G(\varphi)=0$. Thus $\varphi$ is a constant due to the spectrum assumption of $\mathbb G$. This correspondence is also true for sublinear Markovian semigroups and their infinitesimal generators in general cases.


From the above discussions, our result shows that as the $G$-Brownian motion on the unit circle is ergodic, so $0$ is a simple eigenvalue of the corresponding infinitesimal generator 
${\mathbb G}(\cdot)$. In fact, we can prove this result analytically without referring to the result of ergodicity.

\begin{prop}\label{zhao2020a}
Let a continuous function $\varphi$ be a viscosity solution of
\begin{eqnarray}\label{hz28}
{1\over 2}\overline\sigma ^2\varphi_{xx}^+-{1\over 2}\underline\sigma ^2\varphi_{xx}^-
=0, \ \ x\in [0,2\pi], \ \varphi(0)=\varphi(2\pi).
\end{eqnarray}
If $\underline{\sigma} ^2>0$, then $\varphi$ is constant.
\end{prop}
\begin{proof}
Let $\psi$ be a $C^2$ function on $[0,2\pi]$ such that $\psi\geq \varphi$ and $\psi(x)=\varphi(x)$ at certain $x\in [0,2\pi]$ with $\psi^{\prime\prime}(x)\neq 0$. Then
${1\over 2}\overline\sigma ^2\psi^{\prime\prime}(x)^+-{1\over 2}\underline\sigma ^2\psi^{\prime\prime}(x)^-
\geq 0 $. It is then obvious that
\begin{eqnarray}\label{hz29}
\underline\sigma ^2\psi^{\prime\prime}(x)^-
\leq \overline\sigma ^2\psi^{\prime\prime}(x)^+.\end{eqnarray}
If $\psi^{\prime\prime}(x)<0$, then $\psi^{\prime\prime}(x)^->0$ and $\psi^{\prime\prime}(x)^+=0$. This contradicts with (\ref{hz29}).
Thus $\psi^{\prime\prime}(x)\geq 0$ so $\psi$ is locally a convex function near $x$.

Similarly, let $\tilde \psi$ be a $C^2$ function on $[0,2\pi]$ such that $\tilde \psi\leq \varphi$ and $\tilde \psi(x)=\varphi(x)$ at certain $x\in [0,2\pi]$ with $\tilde\psi^{\prime\prime}(x)\neq 0$. Then
${1\over 2}\overline\sigma ^2\tilde\psi^{\prime\prime}(x)^+-{1\over 2}\underline\sigma ^2\tilde\psi^{\prime\prime}(x)^-
\leq 0 $. It is then obvious that
\begin{eqnarray}\label{hz30}
\overline\sigma ^2\tilde\psi^{\prime\prime}(x)^+
\leq \underline\sigma ^2\tilde\psi^{\prime\prime}(x)^-.\end{eqnarray}
If $\tilde\psi^{\prime\prime}(x)>0$, then $\tilde\psi^{\prime\prime}(x)^+>0$ and $\tilde\psi^{\prime\prime}(x)^-=0$. This contradicts with (\ref{hz30}).
Thus $\tilde\psi^{\prime\prime}(x)\leq 0$ and $\tilde\psi$ is locally a concave function near $x$.

A function $\varphi$ that satisfies the above two properties must be a linear function. Now from the periodic boundary of $\varphi$, we conclude easily that $\varphi$ is a constant. 
\end{proof}

\begin{rmk}
The condition $\underline{\sigma} ^2>0$ is crucial for Proposition \ref{zhao2020a}. Otherwise, any smooth concave periodic function $\varphi$ with period $2\pi$ satisfies (\ref{hz28}) since $\varphi_{xx}^+=0$. In that case, Brownian motion (degenerate) on $S^1$ fails to be ergodic.  So Theorem \ref{thm3.27} can be stated as follows:
\end{rmk}
\begin{thm}
The invariant expectation of the $G$-Brownian motion on the unit circle
$S^1=[0,2\pi]$ with normal distribution $N(0,[\underline \sigma^2 t,\overline \sigma ^2 t])$, where
$\overline \sigma ^2   \geq \underline \sigma^2$ are constant, is ergodic if and only if $\underline \sigma^2>0$.
\end{thm}

\section*{Acknowledgements}
We are grateful to the anonymous referees for their constructive comments which helped us to improve the paper substantially. 
We would like to acknowledge the financial support of Royal Society Newton Fund grant NA15034 and an EPSRC grant (ref. EP/S005293/2).

\vskip20pt

\noindent
{\bf Appendix: Proofs of Theorem \ref{hz5} and Lemma \ref{zhao523}}
\vskip5pt

{\it Proof of Theorem \ref{hz5}}. (i)$\Rightarrow$(ii). Assume $B\in    {\cal F}$ and $   \E{\rm I}_{{   \theta}^{-1}B\Delta B}=0$. Define
\begin{eqnarray}
B_{\infty}=\bigcap\limits _{n=0}^{\infty}\bigcup \limits _{i=n}^{\infty} {   \theta}^{-i}B.
\end{eqnarray}
Then it is easy to see that
\begin{eqnarray*}
{   \theta}^{-1}B_{\infty}=\bigcap\limits _{n=0}^{\infty}\bigcup \limits _{i=n+1}^{\infty} {   \theta}^{-i}B=B_{\infty}.
\end{eqnarray*}
Thus $B_{\infty}$ is an invariant set. By the assumption, we have
\begin{eqnarray}\label{hz4}
   \E{\rm I}_{B_{\infty}}=0 \ \ {\rm or }\ \    \E{\rm I}_{B_{\infty}^c}=0.
\end{eqnarray}
Note for any $n\in N$
\begin{eqnarray*}\label{hz1}
{   \theta}^{-n}B\bigtriangleup B&\subset &\bigcup\limits _{i=0}^{n-1}({   \theta}^{-(i+1)}B\bigtriangleup {   \theta}^{-i} B)
\nonumber
\\
&=&\bigcup\limits _{i=0}^{n-1}{   \theta}^{-i}({   \theta}^{-1}B\bigtriangleup B).
\end{eqnarray*}
So by the monotonicity and subadditivity of $   \E$ and the expectation preserving property of $   \theta$,
\begin{eqnarray}\label{eqn2.7}
   \E{\rm I}_{{   \theta}^{-n}B\bigtriangleup B}&\leq &   \E{\rm I}_{\bigcup\limits _{i=0}^{n-1}{   \theta}^{-i}({   \theta}^{-1}B\bigtriangleup B)}\nonumber\\
&\leq&   \E\left [\sum _{i=0}^{n-1}{\rm I}_{{   \theta}^{-i}({   \theta}^{-1}B\bigtriangleup B)}\right ]\nonumber\\
&\leq &\sum _{i=0}^{n-1}   \E{\rm I}_{{   \theta}^{-i}({   \theta}^{-1}B\bigtriangleup B)}\nonumber\\
&=&\sum _{i=0}^{n-1}   \E{\rm I}_{{   \theta}^{-1}B\bigtriangleup B}\nonumber\\
&=&0.
\end{eqnarray}
Moreover
\begin{eqnarray}\label{eqn2.8}
(\bigcup _{i=1}^{\infty}{   \theta}^{-i}B)\bigtriangleup B\subset \bigcup _{i=1}^{\infty}({   \theta}^{-i}B\bigtriangleup B).
\end{eqnarray}
Thus it follows from (\ref{eqn2.7}) and (\ref{eqn2.8}) that
\begin{eqnarray*}
   \E{\rm I}_{(\bigcup _{i=n}^{\infty}{   \theta}^{-i}B)\bigtriangleup B}&\leq&   \E{\rm I}_{\bigcup\limits _{i=0}^{\infty}({   \theta}^{-i}B\bigtriangleup B)}\\
&\leq&\sum _{i=0}^{\infty}   \E{\rm I}_{{   \theta}^{-i}B\bigtriangleup B}\\
&=&0.
\end{eqnarray*}
From the above we have
\begin{eqnarray}\label{hz2}
   \E{\rm I}_{(\bigcup _{i=n}^{\infty}{   \theta}^{-i}B)\setminus B}=0,
\end{eqnarray}
and
\begin{eqnarray}\label{hz3}
   \E{\rm I}_{B\setminus \bigcup _{i=n}^{\infty}{   \theta}^{-i}B}=0.
\end{eqnarray}
But note as $n\to\infty$,
\begin{eqnarray*}
{\rm I}_{(B\setminus \bigcup _{i=n}^{\infty}{   \theta}^{-i}B)}\uparrow {\rm I}_{(B\setminus\bigcap_{n=1}^{\infty}\bigcup _{i=n}^{\infty}{   \theta}^{-i}B)}=I_{B\setminus B_\infty},
\end{eqnarray*}
So by 
the monotone (increasing) convergence of sublinear expectation (\cite{peng2005}, \cite{denis}), we have as $n\to+\infty$,
\begin{eqnarray*}
   \E{\rm I}_{B\setminus \bigcup _{i=n}^{\infty}{   \theta}^{-i}B}\to   \E{\rm I}_{B\setminus B_{\infty}}.
\end{eqnarray*}
Thus it follows from (\ref{hz3}) that
\begin{eqnarray}
   \E{\rm I}_{B\setminus B_{\infty}}=0.
\end{eqnarray}
Moreover
\begin{eqnarray*}
{\rm I}_{(\bigcup _{i=n}^{\infty}{   \theta}^{-i}B)\setminus B}\downarrow {\rm I}_{B_{\infty}\setminus B}.
\end{eqnarray*}
It then follows by applying the monotonicity of sublinear expectation and (\ref{hz2}) that
\begin{eqnarray*}
   \E{\rm I}_{B_{\infty}\setminus B}=0.
\end{eqnarray*}
Note the regularity condition is not needed here. 
Thus
\begin{eqnarray*}
   \E{\rm I}_{B_{\infty}\bigtriangleup  B}=0.
\end{eqnarray*}
Now recall (\ref{hz4}). Consider the case that $   \E{\rm I}_{B_{\infty}}=0$. Note
\begin{eqnarray*}
0=   \E{\rm I}_{B\setminus B_{\infty}}&=&   \E{\rm I}_{B\setminus (B\cap B_{\infty})}\\
&=&   \E[{\rm I}_{B}-{\rm I}_{(B\cap B_{\infty})}]\\
&\geq &   \E[{\rm I}_{B}]-   \E[{\rm I}_{(B\cap B_{\infty})}]\\
&\geq &   \E[{\rm I}_{B}]-   \E[{\rm I}_{B_{\infty}}]\\
&= &   \E[{\rm I}_{B}].
\end{eqnarray*}
Hence
\begin{eqnarray*}
   \E[{\rm I}_{B}]=0.
\end{eqnarray*}
Now consider the case that $   \E{\rm I}_{B_{\infty}^c}=0$. Note
\begin{eqnarray*}
0=   \E{\rm I}_{B_{\infty}\setminus B}&=&   \E{\rm I}_{B^c\setminus (B^c\cap B_{\infty}^c)}\\
&=&   \E[{\rm I}_{B^c}-{\rm I}_{B^c\cap B_{\infty}^c}]\\
&\geq &   \E[{\rm I}_{B^c}]-   \E[{\rm I}_{B^c\cap B_{\infty}^c}]\\
&\geq &   \E[{\rm I}_{B^c}]-   \E[{\rm I}_{B_{\infty}^c}]\\
&=&   \E[{\rm I}_{B^c}].
\end{eqnarray*}
Thus
\begin{eqnarray*}
   \E[{\rm I}_{B^c}]=0.
\end{eqnarray*}
Therefore the assertion (ii) is proved.

(iii)$\Rightarrow$(iv).
Let $   \E{\rm I}_{A}>0$ and $   \E{\rm I}_{B}>0$. From (iii), we know that $   \E{\rm I}_{(\bigcup_{n=1}^{\infty}{   \theta}^{-n}A)^c}=0$.
It then follows together with applying subadditivity and monotonicity of $   \E$ that,
\begin{eqnarray*}
0<   \E{\rm I}_{B}&=&    \E[{\rm I}_{B\bigcap(\bigcup_{n=1}^{\infty}{   \theta}^{-n}A)}+{\rm I}_{B\bigcap(\bigcup_{n=1}^{\infty}{   \theta}^{-n}A)^c}]\\
&\leq &     \E[{\rm I}_{B\bigcap(\bigcup_{n=1}^{\infty}{   \theta}^{-n}A)}]+   \E[{\rm I}_{B\bigcap(\bigcup_{n=1}^{\infty}{   \theta}^{-n}A)^c}]\\
&\leq &     \E[{\rm I}_{\bigcup_{n=1}^{\infty}(B\bigcap{   \theta}^{-n}A)}]+   \E[{\rm I}_{(\bigcup_{n=1}^{\infty}{   \theta}^{-n}A)^c}]\\
&=&    \E[{\rm I}_{\bigcup_{n=1}^{\infty}(B\bigcap{   \theta}^{-n}A)}]\\
&\leq &\sum _{n=1}^{\infty}   \E[{\rm I}_{B\bigcap{   \theta}^{-n}A}].
\end{eqnarray*}
Thus it is obvious that there must exist $n\in {\mathbb N}$ such that
$   \E[{\rm I}_{(B\bigcap{   \theta}^{-n}A)}]>0$. So (iv) is proved.

(iv)$\Rightarrow$(i). Suppose that $B\in {\cal F}$ and ${  \theta }^{-1}B=B$.
If $   \E{\rm I}_B>0$ and $   \E{\rm I}_{B^c}>0$, then by assumption (iv) and invariant assumption of $B$,
\begin{eqnarray*}
0<   \E[{\rm I}_{B^c\bigcap{   \theta}^{-n}B}]=  \E[{\rm I}_{B^c\bigcap B}]=0.
\end{eqnarray*}
This is a contradiction and thus $   \E{\rm I}_B=0$ or $   \E{\rm I}_{B^c}=0$. So (i) is proved.

(ii)$\Rightarrow$(iii)under the regularity assumption. Assume $A\in   {\cal F}$ and $   \E{\rm I}_A>0$. Set
\begin{eqnarray*}
A_1=\bigcup_{n=1}^{\infty}{   \theta}^{-n}A.
\end{eqnarray*}
It is easy to see that ${   \theta}^{-1}A_1\subset A_1$ and
${   \theta}^{-n}A_1=\bigcup_{i=n+1}^{\infty}{   \theta}^{-i}A$. So $\{{   \theta}^{-n}A_1\}_{n\in {\mathbb N}}$ form a decreasing sequence of sets with limit
\begin{eqnarray}
{   \theta}^{-n}A_1\downarrow A_{\infty}=\limsup_{n}({   \theta}^{-n}A),
\end{eqnarray}
where the notation $A_{\infty}$ is used in the same fashion as in the proof of ``(i)$\Rightarrow$(ii)".
It is easy to see that
\begin{eqnarray*}
{   \theta}^{-1}A_{\infty}=A_{\infty}.
\end{eqnarray*}
Thus
\begin{eqnarray*}
   \E{\rm I}_{{   \theta}^{-1}A_{\infty}\bigtriangleup A_{\infty}}=0.
\end{eqnarray*}
According to assumption (ii), we know either  $   \E{\rm I}_{A_{\infty}}=0$ or
$   \E{\rm I}_{A_{\infty}^c}=0$. We claim the case that $   \E{\rm I}_{A_{\infty}}=0$ is impossible.
Otherwise, ${\rm I}_{A_{\infty}}=0$ quasi-surely. It then follows that  ${\rm I}_{{   \theta}^{-n}A_1}\downarrow
{\rm I}_{A_{\infty}}=0$ quasi-surely. So as $   \E$ is regular so that $   \E{\rm I}_{{   \theta}^{-n}A_1}\to 0$ as $n\to \infty$. However by the expectation
preserving property of $   \theta$, the definition of $A_1$ and the monotonicity of $   \E$,
\begin{eqnarray*}
   \E{\rm I}_{{   \theta}^{-n}A_1}=   \E{\rm I}_{A_1}\geq   \E{\rm I}_{{   \theta}^{-1}A}=   \E{\rm I}_{A}>0.
\end{eqnarray*}
We have a contraction. Thus $   \E{\rm I}_{A_{\infty}^c}=0$ holds. Then it follows that  $   \E{\rm I}_{A_{1}^c}=0$ as $A_{\infty}\subset
A_1$ so (iii) is proved. It is then obvious that all the four statements are equivalent under the regularity condition.
\hfill$\Box$
\vskip10pt

\noindent
{\it Proof of Lemma \ref{zhao523}}. Recall $S_n$ is defined by (\ref{eqn2.21}).
Let
$$\bar \xi=\limsup_{n\to \infty} {{S_n}\over n},$$
$ \epsilon>0, $ and
$$D=\{   \omega: \bar \xi(   \omega)>\bar \xi^*(  \omega)+\epsilon\}.$$
Our goal is to prove $   \E[-{\rm I}_D]=0$. Note $\bar \xi(   \theta   \omega)=\bar \xi(   \omega)$, and $
\bar \xi^*(  \theta  \omega)=\bar \xi^*(  \omega)$ quasi-surely,  so $D\in {\cal I}$.

Define
\begin{eqnarray*}
\xi^*(   \omega)&=&(\xi(   \omega)-\bar \xi^*(  \omega)-\epsilon){\rm I}_D(   \omega)\\
S_n^*(   \omega)&=&\xi^*(   \omega)+\dots+\xi^*(\theta_{n-1}^*   \omega)\\
M_n^*(   \omega)&=&\sup \{0, S_1^*(   \omega),\dots, S_n^*(   \omega)\}\\
F_n&=&\{   \omega: M_n^*(   \omega)>0\}
\end{eqnarray*}
and
$$F=\cup_nF_n=\{   \omega:\sup_{k\geq 1} {S_k^*\over k}>0\}.$$
Since $\xi^*(   \omega)=(\xi(   \omega)-\bar \xi^*
(  \omega)-\epsilon){\rm I}_D(   \omega)$ and $D=\{   \omega: \limsup_{k\to \infty}{{S_k}\over k}>\bar \xi^*+\epsilon\}$, it follows that $F=D.$ In fact, if $   \omega\in D$, then
$\sup_{k\geq 1} {{S_k}\over k}>\bar \xi^*+\epsilon,$
and by definition of $\xi^*$,
$ {S_k^*\over k}= {S_k \over k}-\epsilon-\bar \xi^*.$
So
$\sup_{k\geq 1}{S_k^*\over k}>0,$
i.e. $   \omega\in F.$ Therefore $D\subset F$. If $   \omega\notin D$, then $\xi^*(   \omega)=0$. Note $D\in {\cal I}$, so $\xi^*(  \theta_k   \omega)=0$, quasi-surely for all $k$. Therefore $S^*_k(   \omega)=0$ for all $k$, so $   \omega\notin F$. This tells us that $F\subset D$. Thus $F=D$. 

 Now applying the maximal ergodic theorem, we know that
 $   \E[\xi^* {\rm I}_{F_n}]\geq 0.$
 But
 \begin{eqnarray*}
   \E[\xi^* {\rm I}_{F_n}]&=&   \E[(\xi^*)^+ {\rm I}_{F_n}-(\xi^*)^- {\rm I}_{F_n}]\\
&\leq&     \E[(\xi^*)^+ {\rm I}_{F}-(\xi^*)^- {\rm I}_{F}+(\xi^*)^- {\rm I}_{F\setminus F_n}]\\
&\leq &    \E[\xi^* {\rm I}_{F}]+   \E[(\xi^*)^- {\rm I}_{F\setminus F_n}].
 \end{eqnarray*}
But $   \E [ (\xi^*)^- {\rm I}_{F\setminus F_n}]\downarrow 0$ as $n\to \infty$ because ${\rm I}_{F\setminus F_n}\downarrow 0$ and $   \E$ is regular. Thus
 $$   \E[\xi^* {\rm I}_{F}]\geq 0.$$
However, it follows that
\begin{eqnarray*}
0\leq    \E[(\bar\xi-\bar \xi^*-\epsilon){\rm I}_D]&\leq&    \E[(\bar\xi-\bar \xi^*){\rm I}_D]+   \E[-\epsilon {\rm I}_D]\\
&=&
\sup_{P\in {\cal P}}E_P[(\bar\xi-\bar \xi^*){\rm I}_D]+   \E[-\epsilon {\rm I}_D]\\
&=&
\sup_{P\in {\cal P}}E_P[E_P[(\bar\xi-\bar \xi^*){\rm I}_D|{\cal I}]]+   \E[-\epsilon {\rm I}_D]\\
&=&
\sup_{P\in {\cal P}}E_P[E_P[(\bar\xi-\bar \xi^*)|{\cal I}]{\rm I}_D]+   \E[-\epsilon {\rm I}_D]\\
&=&
\sup_{P\in {\cal P}}E_P[E_P[\bar\xi|{\cal I}]-\bar \xi^*]{\rm I}_D]+\epsilon    \E[-{\rm I}_D]\\
&\leq & \epsilon   \E[- {\rm I}_D].
\end{eqnarray*}
Thus $   \E[- {\rm I}_D]\geq 0$. On the other hand, $   \E[- {\rm I}_D]\leq 0$.
So
$   \E[- {\rm I}_D]= 0$ which equivalent to $v(D)=0$. Thus we get (\ref{hz11}). Define
$$\tilde D=\{   \omega: -\liminf_{n\to\infty}{S_n\over n}>-\underline \xi^*+\epsilon\}.$$
Applying the above result to $-\xi$, we can get $v(\tilde D)=0$. Therefore (\ref{hz13}) holds. \hfill $\Box$

   \end{document}